\theoremstyle{thmstyleone}%
\newtheorem{theorem}{Theorem}%  meant for continuous numbers
\newtheorem{proposition}[theorem]{Proposition}% 
\theoremstyle{thmstyletwo}%
\newtheorem{example}{Example}%
\newtheorem{remark}{Remark}%
\theoremstyle{thmstylethree}%
\newtheorem{lemma}{Lemma}
\begin{document}

\title[Article Title]{Symmetric form  geometric constant related to isosceles orthogonality in Banach spaces}

%%=============================================================%%
%% GivenName	-> \fnm{Joergen W.}
%% Particle	-> \spfx{van der} -> surname prefix
%% FamilyName	-> \sur{Ploeg}
%% Suffix	-> \sfx{IV}
%% \author*[1,2]{\fnm{Joergen W.} \spfx{van der} \sur{Ploeg} 
%%  \sfx{IV}}\email{iauthor@gmail.com}
%%=============================================================%%

\author[1]{\fnm{Qichuan} \sur{Ni}}\email{080821074@stu.aqnu.edu.cn}
\equalcont{These authors contributed equally to this work.}

\author*[1]{\fnm{Qi} \sur{Liu}}\email{liuq67@aqnu.edu.cn}

\author[1]{\fnm{Yuxin} \sur{Wang}}\email{y24060028@stu.aqnu.edu.cn}
\equalcont{These authors contributed equally to this work.}

\author[1]{\fnm{Jinyu} \sur{Xia}}\email{y23060036@stu.aqnu.edu.cn}
\equalcont{These authors contributed equally to this work.}

\author[1]{\fnm{Ranran} \sur{Wang}}\email{15922398780@163.com}
\equalcont{These authors contributed equally to this work.}

\affil[1]{School of Mathematics and Physics, Anqing Normal University, Anqing 246133, P. R. China}

%%==================================%%
%% Sample for unstructured abstract %%
%%==================================%%

\abstract{In this article, we introduce a novel geometric constant $L_X(t)$, which provides an equivalent definition of the von Neumann-Jordan constant from an orthogonal perspective. First, we present some fundamental properties of the constant $L_X(t)$ in Banach spaces, including its upper and lower bounds, as well as its convexity, non-increasing continuity. Next, we establish the identities of $L_X(t)$ and the function $\gamma_X(t)$, the von Neumann-Jordan constant, respectively. We also delve into the relationship between this novel  constant and several renowned geometric constants (such as the James constant and the modulus of convexity). Furthermore, by utilizing the lower bound of this new constant, we characterize Hilbert spaces. Finally, based on these findings,  we further investigate the connection between this novel  constant and the geometric properties of Banach spaces, including uniformly non-square, uniformly normal structure, uniformly smooth, etc.}

\keywords{geometric constants, Hilbert spaces, uniformly non-square,  uniformly normal structure}

%%\pacs[JEL Classification]{D8, H51}

\pacs[MSC Classification]{46B20, 46C15}

\maketitle

\section{Introduction}\label{sec1}

	The investigation into the geometric properties of Banach spaces represents a crucial domain within the broader discipline of functional analysis. In this theory, the geometric properties of normed spaces often play an important role, such as uniformly convex, uniformly non-square, etc. Quantifying the geometric constants of the geometric features of normed spaces is very useful for studying geometric properties. For instance, Clarkson introduced the notion of the modulus of convexity to describe uniformly convex spaces \cite{05}, and the von Neumann-Jordan constant  to describe Hilbert spaces and uniformly non-square spaces \cite{00}. 

In Euclidean geometry, the concept of orthogonality is indispensable. On one hand, it is manifested in the fourth axiom of Euclidean geometry, and on the other hand, it plays a crucial role in the Pythagorean theorem. However, Banach space geometry is significantly different from Euclidean geometry because there is no unique concept of orthogonality in Banach space geometry. With the development of Banach space geometry, many different orthogonalities have been introduced into general normalized linear spaces. For instance,
James \cite{06} introduced  isosceles orthogonality ($\perp_{I}$) and Pythagorean orthogonality ($\perp_{P}$): 
$$x \perp_{I} y \text{ if and only if } \|x+y\| = \|x-y\|,$$
and $$x \perp_{P} y \text{ if and only if } \|x-y\|^2 = \|x\|^2+\|y\|^2.$$

In recent years, many scholars have devoted themselves to the definition and in-depth investigation of orthogonal geometric constants. Within the realm of Banach space geometry, orthogonal geometric constants are becoming increasingly significant.
In 2022, based on the parallelogram law and isosceles orthogonality,  Liu et al. \cite{03} proposed the orthogonal geometric constant $\Omega^{\prime}(X)$ as follows: 
$$
\Omega^{\prime}(X)=\sup \left\{\frac{\|2 x+y\|^2+\|x+2 y\|^2}{\|x+y\|^2}: x, y \in X,(x, y) \neq(0,0), x\perp_{I}y\right\} .
$$

They demonstrate that in a finite dimensional Banach space $X$,  $1\leqslant\Omega^{\prime}(X)\leqslant\frac{8}{5}$, $\Omega^{\prime}(X)=\frac{9}{10} \gamma_X(\frac{1}{3})$, and $X$ is a Hilbert space if and only if $\Omega^{\prime}(X)=1$.

Later, in order to investigate the distance between isosceles orthogonality and Pythagorean orthogonality, Yang et al. \cite{01} defined a new orthogonal geometric constant, as follows:
$$
\Omega_X(\alpha)=\sup \left\{\frac{\|\alpha x+y\|^2+\|x+\alpha y\|^2}{\|x+y\|^2}: x, y \in X,(x, y) \neq(0,0), x\perp_{I}y\right\} \text {, where } 0 \leqslant \alpha<1 \text {. }
$$

They first gave the upper and lower bounds of the constant $\Omega_X(\alpha)$, namely $1+\alpha^2$ and $2$, and then established the identity of $\Omega_X(\alpha)=\frac{1+\alpha^2}{2} \gamma_X(\frac{1-\alpha}{1+\alpha})$. Finally, building on this foundational identity, they delineated the relationship between  the constant $\Omega_X(\alpha)$ and the intrinsic geometric properties of  Banach spaces. This exploration encompasses uniformly non-square, uniformly smooth,  uniformly convex, and normal structure, etc. 

Motivated by these constants, we define a  novel geometric constant of symmetric form that provides an equivalent definition for the von Neumann-Jordan constant from an orthogonal perspective, as follows:
$$ L_X(t)=\sup \left\{\frac{\|t x+(1-t) y\|^2+\|(1-t) x+t y\|^2}{\|x+y\|^2}: x, y \in X,(x, y) \neq(0,0), x\perp_{I}y\right\}, \text { where } 0 \leqslant t<\frac{1}{2}.
$$

Clearly, the geometric constant $L_X(t)$ has the following equivalent definitions:
$$\begin{aligned}\ L_X(t)&=\sup \left\{\frac{2\left(\|t x+(1-t) y\|^2+\|(1-t) x+t y\|^2\right)}{\|x+y\|^2+\|x-y\|^2}: x, y \in X,(x, y) \neq(0,0), x\perp_{I}y\right\}, \text { where } 0 \leqslant t<\frac{1}{2},\\
	&=\sup \left\{\frac{\|t x+(1-t) y\|^2+\|(1-t) x+t y\|^2}{\|x+y\|^2}: x, y \in X,(x, y) \neq(0,0), x\perp_{I}y\right\}, \text { where } \frac{1}{2} < t\leqslant1.
\end{aligned}$$

This article is organized as follows. 

In the second part, we revisit some basic concepts in Banach spaces and present a variety of widely recognized geometric constants that are associated with the novel geometric constant $L_X(t)$, as well as significant conclusions about them.

In the third part, we discuss in detail the constant $L_X(t)$. We calculate its bounds. We also list three examples where the constant $L_X(t)$ achieves its upper bound in these spaces and show that $L_X(t)$ is a convex and continuous function.

In the fourth part, we first present the identities involving the constant $L_X(t)$ and  the function $\gamma_X(t)$, as well as the von Neumann-Jordan constant. We also calculate a lower bound for the constant $L_X(t)$ in $l_p$ spaces. Additionally, we study some estimates of the constant $L_X(t)$ and other geometric constants.

In the fifth part, we first use the lower bound of the constant $L_X(t)$ to characterize Hilbert spaces. Utilizing the connection between the constant $L_X(t)$ and other geometric constants, we further explore the relationship between the constant $L_X(t)$ and geometric properties of  Banach spaces, such as being uniformly non-square, having a uniformly normal structure, and being uniformly smooth, etc. Finally, we calculate a lower bound for the constant $L_X(t)$  in $l_p - l_q$ spaces, as well as precise values in both $l_2 - l_1$ and $l_{\infty} - l_1$ spaces.

\section{Preliminaries}\label{sec2}
In this article, we consistently  assume that $X$ is a  real Banach space with $\dim X\geqslant 2$, $B_X = \{x\in X: \|x\|\leqslant 1\}$ and $S_X = \{x\in X: \|x\|= 1\}$ will denote the unit ball and the  unit sphere of $X$, respectively.
Now, we will revisit  some concepts of the geometric properties in Banach spaces, along with a discussion on several widely recognized geometric constants.

If there exists $\delta\in(0,1)$, and for any $x, y\in S_X$, it holds that  $\|x + y\|\leqslant 2(1-\delta)$ or $\|x-y\|\leqslant 2(1-\delta)$, then  $X$ is called a uniformly non-square space \cite{12}. Given that for any $\varepsilon> 0$, there exist $x, y\in S_X$ such that $\|x\pm y\|> 2-\varepsilon$, then  $X$ is not uniformly non-square.

A Banach space $ X $ is said to possess a (weak) normal structure \cite{13} if, for every (weakly compact) closed, bounded, and convex subset $ K $ of $ X $ containing more than one point, there exists a point $ x_0 \in K $ such that
$$\sup\{\|x_0-y\| : y \in K\} < d(K) = \sup\{\|x-y\| : x, y \in K\}.$$
Furthermore, a Banach space $ X $ has a uniform normal structure if, there exists a constant $ 0 < c < 1 $ such that for every closed, bounded, and convex subset $ K $ of $ X $ containing more than one point, there exists a point $ x_0 \in K $ such that
$$\sup\{\|x_0-y\| : y \in K\} < cd(K) = c\sup\{\|x-y\| : x, y \in K\}.$$

The concepts of normal and weakly normal structures play a crucial role in fixed point theory. It is clear that in every reflexive Banach space, the property of having a normal structure is tantamount to possessing a weakly normal structure. James  \cite{12} demonstrated  that if a Banach space $X$ is uniformly non-square, then it can be inferred that $X$ is reflexive. Additionally, Kirk \cite{15} demonstrated that if a reflexive Banach space $X$ has a normal structure, then $X$ must possess the fixed point property. It is noteworthy that every uniformly non-square Banach space possesses the fixed point property, a fact that has been confirmed in \cite{14}.

A Banach space $ X $ is called uniformly convex if for any $ 0 < \varepsilon \leqslant 2 $, there exists a $ \delta > 0 $ such that if $ x, y \in S_X $ and $ \|x - y\| \geqslant \varepsilon $, it holds that $\frac{\|x + y\|}{2} \leqslant 1 - \delta.$

A Banach space $ X $ is called strictly convex if for any $x,y\in S_X$ and $x \neq y$, it holds that $\|x + y\| < 2$.

The von Neumann-Jordan constant $C_{\mathrm{NJ}}(X)$ was introduced by Clarkson \cite{00} and is defined as  follows:
$$C_{\mathrm{NJ}}(X)=\sup \left\{\frac{\|x+y\|^2+\|x-y\|^2}{2\left(\|x\|^2+\|y\|^2\right)}:  x, y \in X,(x, y)\neq(0,0)\right\}.$$

Since then, numerous scholars have conducted in-depth research on the von Neumann-Jordan constant and dedicated themselves to its promotion, uncovering many excellent properties. For example,\\
(i) $1\leqslant C_{\mathrm{NJ}}(X)\leqslant2$;\\
(ii) $X$ is a Hilbert space if and only if $C_{\mathrm{NJ}}(X) = 1$;\\
(iii) $X$ is uniformly non-square space if and only if $C_{\mathrm{NJ}}(X) < 2$ \cite{09}.\\
The modified von Neumann-Jordan constant \cite{00} is defined as
$$C_{\mathrm{NJ}}^{\prime}(X)=\sup \left\{\frac{\|x+y\|^2+\|x-y\|^2}{4}:  x, y \in S_X\right\}.$$

In  \cite{02},  P.L. Papini mentioned the following constant:
$$C_{\mathrm{NJ}}^{\prime\prime}(X)=\sup \left\{\frac{\|x+y\|^2+\|x-y\|^2}{2\left(\|x\|^2+\|y\|^2\right)}:  x, y \in X,(x, y)\neq(0,0), x\perp_{I}y\right\}.$$
P.L. Papini showed that the results (ii) and (iii) are also true for the constant $C_{\mathrm{NJ}}^{\prime\prime}(X)$.

The modulus of smoothness  \cite{07} is  defined by the function $\rho(t):$
$$\rho(t)=\sup\left\{\frac{\|x+ty\|+\|x-ty\|}{2}-1:x,y\in S_X\right\}.$$
A Banach space $ X $ is said to be uniformly smooth if $
\lim _{t \rightarrow 0} \frac{\rho(t)}{t}=0.$

In \cite{04}, Yang introduced the function $\gamma_X(t)$: [0,1] $\rightarrow$ [1,4], which is defined as
$$
\gamma_X(t) =\sup \left\{\frac{\|x+t y\|^2+\|x-t y\|^2}{2}: x \in S_X, y \in S_X\right\} .
$$
The function $\gamma_X(t)$ has the following properties:\\
(i) $1 \leqslant 1+t^2 \leqslant \gamma_X(t) \leqslant(1+t)^2 \leqslant 4$.\\
(ii) $X$ is a Hilbert space if and only if $\gamma_X(t) = 1 + t^2$ for any $t\in [0, 1].$\\
(iii) If $2\gamma_X(t) < 1 + (1 + t)^2$ for some $t\in(0, 1]$, then $X$ has uniform normal structure.\\
(iv) If $\lim _{t \rightarrow 0^{+}} \frac{ \gamma_X(t)-1}{t}=0$, then $X$ is uniformly smooth.\\
It is clear that the von Neumann-Jordan constant $C_{\mathrm{NJ}}(X)$ can be defined equivalently as follows:
$$
C_{\mathrm{NJ}}(X)=\sup \left\{\frac{\gamma_X(t)}{1+t^2}: 0 \leqslant t \leqslant 1\right\}.
$$

The modulus of convexity \cite{05} is defined by the function $\delta_X$:
$$\delta_X(\varepsilon)=\inf \left\{1-\frac{\|x+y\|}{2}: x, y \in S_X,\|x-y\|=\varepsilon\right\}, \quad 0\leqslant\varepsilon \leqslant 2.$$
We have compiled a list of properties for this constant, as follows:\\
(i) If $\delta_X(1)> 0$, then $X$ has normal structure \cite{08}.\\
(ii) If $\delta_X(\varepsilon)>0$ for any $\varepsilon \in (0, 2]$, then $X$ is called uniformly convex.\\
(iii) $X$ is strictly convex if and only if $\delta_X(2) = 1$  \cite{10}.\\
(iv) $X$ is uniformly convex if and only if $\sup\{\varepsilon\in [0, 2] :\delta_X(\varepsilon) = 0\} = 0$ \cite{08}.

In \cite{11}, Gao and Lau introduced the James constant, which is defined as
$$J(X) = \sup \{\min\{\|x+y\|, \|x-y\|\} : x, y \in S_X\}.$$
Note that $$\begin{aligned} 
	J(X) &= \sup \{\min\{\|x+y\|, \|x-y\|\} : x, y \in B_X\}\\
	&=\sup \{\|x+y\| : x, y \in S_X, x\perp_{I}y\}.
\end{aligned} $$

\section{Some Bounds and Properties of $L_X(t)$}
First, we compute the bounds of $L_X(t)$.
\begin{proposition}
	Let $X$ ba Banach space, then $2t^2-2t+1\leqslant L_X(t)\leqslant 2t^2-4t+2$.
\end{proposition}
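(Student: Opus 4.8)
The plan is to establish the two estimates independently: for the upper bound I would use a symmetrizing change of variables, and for the lower bound a single well-chosen admissible pair.

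\textbf{Lower bound.} First I would test the supremum on the degenerate pair $y=0$ (with $x\neq 0$ arbitrary). Since $\|x+0\|=\|x\|=\|x-0\|$, we have $x\perp_{I}0$, so the pair is admissible. For it the quotient reduces to
\[
\frac{\|tx\|^2+\|(1-t)x\|^2}{\|x\|^2}=t^2+(1-t)^2=2t^2-2t+1,
\]
which yields $L_X(t)\geqslant 2t^2-2t+1$ immediately, with no optimization needed.

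\textbf{Upper bound.} For a generic admissible pair I would pass to the variables $u=x+y$ and $v=x-y$. A direct linear computation gives $tx+(1-t)y=\tfrac12\bigl(u+(2t-1)v\bigr)$ and $(1-t)x+ty=\tfrac12\bigl(u-(2t-1)v\bigr)$, so the numerator becomes $\tfrac14\bigl(\|u+(2t-1)v\|^2+\|u-(2t-1)v\|^2\bigr)$. The orthogonality hypothesis $x\perp_{I}y$ is exactly $\|u\|=\|v\|$, and on the range $0\leqslant t<\tfrac12$ one has $|2t-1|=1-2t$. Applying the triangle inequality to each term, $\|u\pm(2t-1)v\|\leqslant\|u\|+(1-2t)\|v\|=(2-2t)\|u\|$, and dividing by the denominator $\|x+y\|^2=\|u\|^2$ yields
\[
\frac{\|tx+(1-t)y\|^2+\|(1-t)x+ty\|^2}{\|x+y\|^2}\leqslant \frac{(2-2t)^2}{2}=2t^2-4t+2.
\]
Taking the supremum over all admissible pairs gives $L_X(t)\leqslant 2t^2-4t+2$.

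I do not expect a genuine obstacle here: once the substitution $u=x+y,\ v=x-y$ is in place the argument is routine, the only points requiring care being the sign bookkeeping $|2t-1|=1-2t$ on the prescribed range and the fact that $y=0$ is a legitimate choice (the sole constraint being $(x,y)\neq(0,0)$). It is worth noting that the triangle-inequality bound used above is precisely the extremal estimate $\gamma_X(s)\leqslant(1+s)^2$ with $s=1-2t$, which foreshadows the exact identity between $L_X(t)$ and $\gamma_X$ established later; a reader could alternatively deduce both inequalities from that identity together with property (i) of $\gamma_X$.
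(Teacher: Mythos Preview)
Your proof is correct and follows essentially the same route as the paper: the lower bound comes from a degenerate admissible pair (the paper takes $x=0,\ y\neq0$ while you take $y=0,\ x\neq0$, which is the same by symmetry), and the upper bound is obtained via the change of variables $u=x+y,\ v=x-y$ together with the triangle inequality and the hypothesis $\|u\|=\|v\|$. Your bookkeeping, including the identification with the bound $\gamma_X(1-2t)\leqslant(2-2t)^2$, is accurate.
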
 
\begin{proof} 
	Let $x=0,y\neq0$, then $x \perp_I y$ and
	$$\begin{aligned} 
		L_X(t)&\geqslant \frac{\|(1-t) y\|^2+\|t y\|^2}{\|y\|^2}\\
		&=2t^2-2t+1.
	\end{aligned}$$
	On the other hand, 
	$$\begin{aligned} 
		\frac{\|t x+(1-t) y\|^2+\|(1-t) x+t y\|^2}{\|x+y\|^2} &= \frac{\|\frac{1}{2}(x+y)-\frac{1-2t}{2}(x-y)\|^2+\|\frac{1}{2}(x+y)+\frac{1-2t}{2}(x-y)\|^2}{\|x+y\|^2}\\
		&\leqslant \frac{2\left(\frac{1}{2}\|x+y\|+\frac{1-2t}{2}\|x-y\|\right)^2}{\|x+y\|^2}\\
		&=2t^2-4t+2.
	\end{aligned}$$
\end{proof}
In the following  three examples, we demonstrate that the upper bound of $L_X(t)$ is sharp.
\begin{example}\label{3.1}
	Let $X=(\mathbb{R}^{2},\|\cdot\|_{1})$, then $L_X(t)=2t^2-4t+2$.
\end{example}		
\begin{proof}
	Let $x=(1,1)$, $y=(1,-1)$, then $x\perp_{I}y$ and $\|t x+(1-t) y\|_1=\|(1-t) x+ty\|_1=2-2t$. Hence, we have $L_X(t)=2t^2-4t+2$.
\end{proof}		

\begin{example}
	Let $X=(\mathbb{R}^{2},\|\cdot\|_{\infty})$, then $L_X(t)=2t^2-4t+2$.
\end{example}		
\begin{proof}
	Let $x=(1,0)$, $y=(0,-1)$, then $x\perp_{I}y$ and $\|t x+(1-t) y\|_{\infty}=\|(1-t) x+ty\|_{\infty}=1-t$. Hence, we have $L_X(t)=2t^2-4t+2$.
\end{proof}		
\begin{example}		
	Let $C[a,b]$ denote the linear space of all real-valued continuous functions, with the norm defined as:
	$$\|x\|=\sup_{m\in[a,b]} |x(m)|,$$
	then $L_X(t)=2t^2-4t+2$.
\end{example}
\begin{proof}
	Let $x_0=\frac{1}{a-b}(m-b),y_0=\frac{-1}{a-b}(m-b)+1 \in S_{C[a,b]}$, then $x_0\perp_{I} y_0,$ we have
	$$
	\begin{aligned}
		L_X(t)&\geqslant\frac{\|t x_0+(1-t) y_0\|^2+\|(1-t) x_0+t y_0\|^2}{\|x_0+y_0\|^2} \\ &=\sup_{m\in[a,b]}\left|\frac{2t-1}{a-b}(m-b)+1-t\right|^2+\sup_{m\in[a,b]}\left|\frac{1-2t}{a-b}(m-b)+t\right|^2\\
		&=2t^2-4t+2.
	\end{aligned}
	$$
\end{proof}
Finally, we determined that the constant $L_X(t)$ is a convex and continuous function.
\begin{proposition}
	Let $X$ be a Banach space. Then\\
	\rm{(i)} $L_X(t)$ is a convex function.\\
	\rm{(ii)} $L_X(t)$ is continuous on  $[0, \frac{1}{2})$.\\
	\rm{(iii)} If $X$ is a Hilbert space, then $\frac{L_X(t)-1}{1-t}$ is non-increasing on $[0, \frac{1}{2})$.
\end{proposition}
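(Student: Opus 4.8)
The plan is to treat the three parts in order, with (i) feeding into (ii). For (i), I would realize $L_X(t)$ as a pointwise supremum of convex functions. Fix an admissible pair, i.e. $x \perp_I y$ with $(x,y) \neq (0,0)$, and set
$$f_{x,y}(t) = \frac{\|tx+(1-t)y\|^2 + \|(1-t)x+ty\|^2}{\|x+y\|^2}.$$
Each summand in the numerator is convex in $t$: the map $t \mapsto tx+(1-t)y$ is affine, the norm is convex, so $t \mapsto \|tx+(1-t)y\|$ is a nonnegative convex function, and squaring a nonnegative convex function (composing with the convex nondecreasing map $r \mapsto r^2$ on $[0,\infty)$) preserves convexity. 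The denominator $\|x+y\|^2$ is a positive constant independent of $t$, so $f_{x,y}$ is convex, and a pointwise supremum of convex functions is convex; hence $L_X$ is convex on $[0,\tfrac12)$.

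For (ii), convexity already yields continuity on the open interval $(0,\tfrac12)$, but not at the left endpoint $t=0$, where a convex function may in principle jump; this endpoint is the only delicate point. To settle it I would prove a uniform Lipschitz estimate. The key device is the substitution $u = x+y$, $v = x-y$; since $x\perp_I y$ we have $\|u\| = \|v\| =: \rho > 0$, and writing $s = 1-2t \in (0,1]$ one obtains
$$f_{x,y}(t) = \frac{\|u - sv\|^2 + \|u + sv\|^2}{4\rho^2}.$$
Then for $s,s' \in (0,1]$ the elementary bound $\big|\,\|u\pm sv\| - \|u\pm s'v\|\,\big| \leq |s-s'|\,\rho$ combined with $\|u\pm sv\| + \|u\pm s'v\| \leq 4\rho$ gives $|f_{x,y}(t) - f_{x,y}(t')| \leq 2|s-s'| = 4|t-t'|$, a bound independent of $(x,y)$. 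Passing to the supremum yields $|L_X(t) - L_X(t')| \leq 4|t-t'|$, so $L_X$ is Lipschitz, hence continuous, on all of $[0,\tfrac12)$, endpoint included. Establishing this uniform estimate (and recognizing that mere convexity is insufficient at $t=0$) is the main, and essentially the only real, obstacle.

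For (iii), I would simply compute $L_X$ explicitly in a Hilbert space. There isosceles orthogonality coincides with inner-product orthogonality, so $x\perp_I y$ forces $\langle x,y\rangle = 0$; expanding the two squared norms and using $\|x+y\|^2 = \|x\|^2+\|y\|^2$ then gives $f_{x,y}(t) = t^2 + (1-t)^2 = 2t^2 - 2t + 1$, independent of the admissible pair, so that $L_X(t) = 2t^2 - 2t + 1$. Consequently
$$\frac{L_X(t)-1}{1-t} = \frac{2t^2-2t}{1-t} = -2t,$$
which is strictly decreasing, and in particular non-increasing, on $[0,\tfrac12)$.
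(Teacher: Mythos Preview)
Your proof is correct in all three parts, and for part (i) it coincides with the paper's argument (pointwise supremum of convex functions, since $t\mapsto\|tx+(1-t)y\|^2$ is the square of a nonnegative convex function).

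For parts (ii) and (iii), however, you diverge from the paper in interesting ways. In (ii) the paper simply writes ``Obvious'', presumably appealing to the fact that convex functions are continuous on the interior of their domain; as you correctly observe, this leaves the endpoint $t=0$ unaddressed, where a convex function on $[0,\tfrac12)$ can in principle jump downward. Your uniform Lipschitz estimate via the substitution $u=x+y$, $v=x-y$ closes this gap cleanly and is more rigorous than the paper's treatment. In (iii) the paper argues abstractly: it uses the (extended) definition to note $L_X(1)=L_X(0)=1$ in a Hilbert space, then writes $t_2$ as a convex combination of $t_1$ and $1$ and invokes convexity to compare the difference quotients. You instead compute $L_X(t)=2t^2-2t+1$ directly from the inner-product orthogonality, obtaining $\frac{L_X(t)-1}{1-t}=-2t$, which is manifestly decreasing. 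Your route is more elementary and self-contained; the paper's route is structurally nicer in that it would apply to any convex function with the same boundary values, but it relies on extending the domain to include $t=1$, which was not established in part (i) as stated.
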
	
\begin{proof}
	(i) Let $x\perp_{I}y$, $t_1,t_2 \in [0, \frac{1}{2}), \lambda\in(0, 1)$.\\
	Since $$\|(\lambda t_1+(1-\lambda)t_2)x+(1-(\lambda t_1+(1-\lambda)t_2))y\|=\|\lambda(t_1x+(1-t_1)y)+(1-\lambda)(t_2x+(1-t_2)y)\|,$$
	and $$\|(1-(\lambda t_1+(1-\lambda)t_2))x+(\lambda t_1+(1-\lambda)t_2)y\|=\|\lambda((1-t_1)x+t_1y)+(1-\lambda)((1-t_2)x+t_2y)\|.$$
	Then we have
	$$\begin{aligned}
		&\|(\lambda t_1+(1-\lambda)t_2)x+(1-(\lambda t_1+(1-\lambda)t_2))y\|^2+\|(1-(\lambda t_1+(1-\lambda)t_2))x+(\lambda t_1+(1-\lambda)t_2)y\|^2\\
		&\leqslant\left[\lambda\|t_1x+(1-t_1)y\|+(1-\lambda)\|t_2x+(1-t_2)y\|\right]^2+\left[\lambda\|(1-t_1)x+t_1y\|+(1-\lambda)\|(1-t_2)x+t_2y\|\right]^2\\
		&\leqslant\lambda\left[\|t_1x+(1-t_1)y\|^2+\|(1-t_1)x+t_1y\|^2\right]+(1-\lambda)\left[\|t_2x+(1-t_2)y\|^2+\|(1-t_2)x+t_2y\|^2\right],
	\end{aligned}
	$$
	which implie that 
	$$L_X(\lambda t_1+(1-\lambda)t_2)\leqslant \lambda L_X(t_1)+(1-\lambda)L_X(t_2).$$
	(ii) Obvious.\\
	(iii) Since $X$ is a Hilbert space, we have $L_X(1)=L_X(0)=1$.\\ Let $0 \leqslant t_1<t_2<\frac{1}{2}$, then
	$$
	\begin{aligned}
		\frac{L_X\left(t_2\right)-1}{1-t_2} & =\frac{L_X\left(\frac{t_2-t_1}{1-t_1} \cdot 1+\left(1-\frac{t_2-t_1}{1-t_1}\right) \cdot t_1\right)-1}{1-t_2} \\
		& \leqslant   \frac{\frac{t_2-t_1}{1-t_1} +\left(1-\frac{t_2-t_1}{1-t_1}\right) L_X\left(t_1\right)-1}{1-t_2}\\
		&=\frac{L_X\left(t_1\right)-1}{1-t_1},
	\end{aligned}
	$$
	thus $\frac{L_X(t)-1}{1-t}$ is a non-increasing function.
\end{proof}

\section{$L_X(t)$ in Relation to Other  Geometric Constants}
First, we present the identity for  $L_X(t)$ 
and $\gamma_X(t)$, and on this basis, we establish the identity between $L_X(t)$ and the von Neumann-Jordan constant 	$C_{\mathrm{NJ}}(X)$. This is the core of this article, explaining the von Neumann-Jordan constant from an orthogonal perspective.
\begin{theorem} \label{1123}
	Let $X$ be Banach space, then $L_X(t)=\frac{1}{2} \gamma_X\left(1-2t\right)$.
\end{theorem}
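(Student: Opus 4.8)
The plan is to reduce the supremum defining $L_X(t)$ to the one defining $\gamma_X$ through the linear change of variables $u = x+y$, $v = x-y$, which diagonalizes the isosceles orthogonality condition. The starting point is the algebraic identity already exploited in the proof of Proposition~1:
\[
tx+(1-t)y = \tfrac{1}{2}(x+y) - \tfrac{1-2t}{2}(x-y), \qquad (1-t)x+ty = \tfrac{1}{2}(x+y) + \tfrac{1-2t}{2}(x-y).
\]
Setting $s = 1-2t$, so that the range $0 \leqslant t < \tfrac{1}{2}$ corresponds to $0 < s \leqslant 1$, and writing $u = x+y$, $v = x-y$, the numerator of the defining quotient becomes $\tfrac{1}{4}\bigl(\|u-sv\|^2 + \|u+sv\|^2\bigr)$ while the denominator is $\|u\|^2$.

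Next I would record that the orthogonality hypothesis is exactly what makes $u$ and $v$ comparable. Since $x \perp_{I} y$ means $\|x+y\| = \|x-y\|$, we have $\|u\| = \|v\|$; moreover $(x,y)\neq(0,0)$ forces $\|u\| = \|v\| > 0$, for if $u=0$ then $v=0$ too, giving $x=y=0$. Dividing numerator and denominator by $\|u\|^2 = \|v\|^2$ and normalizing $u_0 = u/\|u\|$, $v_0 = v/\|v\|$ to unit vectors, the quotient equals
\[
\frac{1}{2}\cdot\frac{\|u_0+sv_0\|^2 + \|u_0-sv_0\|^2}{2},
\]
which is precisely $\tfrac{1}{2}$ times the expression appearing inside the supremum defining $\gamma_X(s)$.

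The remaining work is to check that the supremum over admissible pairs $(x,y)$ yields the same value as the supremum over $(u_0, v_0) \in S_X \times S_X$, i.e.\ to verify both inequalities. For $L_X(t) \leqslant \tfrac{1}{2}\gamma_X(s)$, every admissible pair $x \perp_{I} y$ produces, as above, a pair $(u_0, v_0) \in S_X\times S_X$ for which the quotient equals $\tfrac{1}{2}$ times the $\gamma_X$-integrand, so the ratio is bounded by $\tfrac{1}{2}\gamma_X(s)$. For the reverse inequality, given arbitrary $u_0, v_0 \in S_X$ I would set $x = \tfrac{1}{2}(u_0+v_0)$ and $y = \tfrac{1}{2}(u_0-v_0)$, so that $x+y = u_0$ and $x-y = v_0$; then $\|x+y\| = 1 = \|x-y\|$ shows $x\perp_{I} y$ with $(x,y)\neq(0,0)$, and the quotient for this pair reproduces $\tfrac{1}{2}$ times the $\gamma_X$-integrand at $(u_0, v_0)$. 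Taking suprema gives $L_X(t) \geqslant \tfrac{1}{2}\gamma_X(s)$, and combining the two yields $L_X(t) = \tfrac{1}{2}\gamma_X(1-2t)$.

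The computations here are all routine; the one point requiring care — and essentially the only place the argument could go wrong — is the equivalence of the two supremum domains. Concretely, I must confirm that the correspondence $(x,y) \mapsto (u_0,v_0)$ is surjective onto $S_X\times S_X$, which is handled by the explicit inverse $x=\tfrac{1}{2}(u_0+v_0)$, $y=\tfrac{1}{2}(u_0-v_0)$, and that the quotient is invariant under the positive scaling introduced by normalization, so that no admissible configurations are lost on either side.
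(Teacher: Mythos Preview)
Your proposal is correct and follows essentially the same route as the paper: the paper also uses the substitution $\alpha=\tfrac{x+y}{2}$, $\beta=\tfrac{x-y}{2}$ (your $u/2$, $v/2$) to rewrite the quotient as $\tfrac{1}{4}\bigl(\|x'-(1-2t)y'\|^2+\|x'+(1-2t)y'\|^2\bigr)$ with $x',y'\in S_X$, and for the reverse inequality it takes $x,y\in S_X$ and sets $\alpha=\tfrac{x+y}{2}$, $\beta=\tfrac{x-y}{2}$, which is exactly your inverse map. The only difference is notational.
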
 
\begin{proof}[Proof of Theorem~{\upshape\ref{1123}}]
	Let $x, y \in X$ such that $x \perp_I y$, we choose $\alpha=\frac{x+y}{2}, \beta=\frac{x-y}{2}$, then
	$$
	t x+(1-t) y=\alpha-(1-2t)\beta, (1-t) x+t y=\alpha+(1-2t)\beta,
	$$
	thus $\|\alpha\|=\|\beta\|$ and
	$$
	\begin{aligned}
		\frac{\|t x+(1-t) y\|^2+\|(1-t) x+t y\|^2}{\|x+y\|^2} & =\frac{\|\alpha-(1-2t)\beta\|^2+\|\alpha+(1-2t)\beta\|^2}{4\|\alpha\|^2}.
	\end{aligned}
	$$
	Let $x^{\prime}=\frac{\alpha}{\|\alpha\|}, y^{\prime}=\frac{\beta}{\|\beta\|}$, then $x^{\prime}, y^{\prime} \in S_X$ and
	$$
	\begin{aligned}
		\frac{\|\alpha-(1-2t)\beta\|^2+\|\alpha+(1-2t)\beta\|^2}{\|\alpha\|^2} & =\left\|x^{\prime}-(1-2t) y^{\prime}\right\|^2+\left\|x^{\prime}+(1-2t) y^{\prime}\right\|^2 \\
		& \leqslant 2 \cdot\gamma_X\left(1-2t\right),
	\end{aligned}
	$$
	that is, $L_X(t) \leqslant \frac{1}{2} \gamma_X\left(1-2t\right)$.
	
	On the other hand, let $x, y \in S_X$, we choose $\alpha=\frac{x+y}{2}, \beta=\frac{x-y}{2}$, then $\alpha+\beta, \alpha-\beta \in S_X$. Since
	$$
	\begin{aligned}
		\frac{\left\|x-(1-2t)y\right\|^2+\left\|x+(1-2t) y\right\|^2}{2} & =\frac{\left\|\alpha+\beta-(1-2t)(\alpha-\beta)\right\|^2+\left\|\alpha+\beta+(1-2t)(\alpha-\beta)\right\|^2}{2\|\alpha+\beta\|^2} \\
		& =2 \cdot \frac{\|t\alpha+(1-t)\beta\|^2+\|(1-t) \alpha+t\beta\|^2}{\|\alpha+\beta\|^2} \\
		& \leqslant 2L_X(t),
	\end{aligned}$$
	that is, $L_X(t) \geqslant \frac{1}{2} \gamma_X\left(1-2t\right).$
\end{proof}
\begin{remark}
	Since $\gamma_X(t)$ is a non-decreasing function, then it's easy to know $L_X(t)$ is a non-increasing on  $[0, \frac{1}{2})$.
\end{remark}
\begin{theorem} \label{thm4}
	Let $X$ be Banach space, then $C_{\mathrm{NJ}}(X)=\sup \left\{\frac{2L_X\left(\frac{1-\eta}{2}\right)}{1+\eta^2}: 0\leqslant  \eta\leqslant 1\right\}$.
\end{theorem}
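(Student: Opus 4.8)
The plan is to reduce everything to the identity already established in Theorem~\ref{1123} together with the equivalent definition of the von Neumann--Jordan constant recorded in the preliminaries, namely
$$
C_{\mathrm{NJ}}(X)=\sup \left\{\frac{\gamma_X(t)}{1+t^2}: 0 \leqslant t \leqslant 1\right\}.
$$
Since Theorem~\ref{1123} gives $L_X(t)=\tfrac{1}{2}\gamma_X(1-2t)$, the whole statement should follow from a single change of variable, so the argument is essentially a bookkeeping exercise rather than a new estimate.

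Concretely, I would introduce the substitution $\eta = 1-2t$, equivalently $t=\tfrac{1-\eta}{2}$. First I would observe that as $t$ runs over $[0,\tfrac12)$, the parameter $\eta = 1-2t$ runs over $(0,1]$, so the ranges of the two suprema are compatible. Plugging $t=\tfrac{1-\eta}{2}$ into the identity of Theorem~\ref{1123} yields $1-2t=\eta$ and hence
$$
L_X\!\left(\tfrac{1-\eta}{2}\right)=\tfrac{1}{2}\gamma_X(\eta),
\qquad\text{so}\qquad
2\,L_X\!\left(\tfrac{1-\eta}{2}\right)=\gamma_X(\eta).
$$
Dividing by $1+\eta^2$ then gives $\dfrac{2L_X\!\left(\tfrac{1-\eta}{2}\right)}{1+\eta^2}=\dfrac{\gamma_X(\eta)}{1+\eta^2}$ for every such $\eta$, and taking the supremum over the common range produces exactly the expression for $C_{\mathrm{NJ}}(X)$ above (with the dummy variable renamed from $\eta$ to $t$).

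The only point requiring a little care — and the nearest thing to an obstacle — is the endpoint $\eta=0$, which corresponds to $t=\tfrac12$ and therefore lies just outside the domain $[0,\tfrac12)$ on which $L_X$ was defined. I would handle this either by a direct computation ($L_X(\tfrac12)=\tfrac12$ and $\gamma_X(0)=1$, so both sides contribute the value $1$ at $\eta=0$), or more simply by remarking that the value of the quotient at $\eta=0$ equals $1\leqslant C_{\mathrm{NJ}}(X)$ and so cannot change either supremum; alternatively one may appeal to the continuity of $L_X$ to extend the identity to the closed endpoint. With this endpoint settled, the equality of the two suprema is immediate, completing the proof.
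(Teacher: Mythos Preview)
Your proposal is correct and follows essentially the same route as the paper: the paper also invokes Theorem~\ref{1123}, performs the substitution $\eta=1-2t$, and handles the endpoint by explicitly extending the definition with $L_X(\tfrac12)=\tfrac12$ before taking the supremum. Your treatment of the endpoint is slightly more thorough (offering several equivalent justifications), but the argument is otherwise identical.
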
 
\begin{proof}[Proof of Theorem~{\upshape\ref{thm4}}]
	First, we need to expand upon the definition with $L_X(\frac{1}{2})= \frac{1}{2}$.	Combined with Theorem \ref{1123}, we have  $$L_X(t)=\frac{1}{2}\cdot \gamma_X\left(1-2t\right),$$
	let $1-2t=\eta$,  then $\eta\in[0,1]$ and
	$$ \gamma_X\left(\eta\right)=2\cdot L_X\left(\frac{1-\eta}{2}\right).$$
	Hence, we can obtain that
\begin{equation}
		C_{\mathrm{NJ}}(X)=\sup \left\{\frac{2L_X\left(\frac{1-\eta}{2}\right)}{1+\eta^2}: 0\leqslant  \eta\leqslant 1\right\}.\label{eq01}
\end{equation}
\end{proof}
In the following two examples, we estimate the value of the geometric constant $L_X(t)$ in the $l_p$ space, and use the identity of (\ref{eq01}) to calculate the value of the von Neumann-Jordan constant in $X=(\mathbb{R}^{2},\|\cdot\|_{1})$, which is in agreement with the known result, indicating that the identity is meaningful.
\begin{example} Let $l^p$ ($ 1 < p < \infty $) be the linear space of all sequences in $\mathbb{R}$ for which $\sum_{i=1}^\infty |x_i|^p < \infty$. The norm is defined as:
	$$
	\|x\|_p=\left(\sum_{i=1}^{\infty}\left|x_i\right|^p\right)^{\frac{1}{p}},
	$$
	for any sequence $x = (x_i) \in l^p$.
	Then 	$$
	\ L_{l_p}(t) \geqslant 2^{1-\frac{2}{p}}\left((1-t)^p+t^p\right)^{\frac{2}{p}} .
	$$
	Particularly, if $2 \leqslant p<\infty$, then 	$$
	L_{l_p}(t) = 2^{1-\frac{2}{p}}\left((1-t)^p+t^p\right)^{\frac{2}{p}} .
	$$
\end{example}
\begin{proof}
	Let $x=\left(\frac{1}{2^{\frac{1}{p}}}, \frac{1}{2^{\frac{1}{p}}}, 0, \cdots, 0\right), y=\left(\frac{1}{2^{\frac{1}{p}}},-\frac{1}{2^{\frac{1}{p}}}, 0, \cdots, 0\right)$, then $x,y \in S_X$ and
	$$
	\gamma_{l_p}(t) \geqslant \frac{\left\|x+t y\right\|^2+\left\|x-t y\right\|^2}{2}=\left(\frac{(1+t)^p+(1-t)^p}{2}\right)^{\frac{2}{p}},
	$$
	hence
	$$
	L_{l_p}(t) \geqslant \frac{1}{2} \cdot\left(\frac{\left(2-2t\right)^p+\left(2t\right)^p}{2}\right)^{\frac{2}{p}}=2^{1-\frac{2}{p}}\left((1-t)^p+t^p\right)^{\frac{2}{p}}.$$
	Particularly, if $2\leqslant p<\infty$, then  $\gamma_{l_p}(t)=\left(\frac{(1+t)^p+(1-t)^p}{2}\right)^{\frac{2}{p}}$ \cite{04}, so we can infer that
	$$
	L_{l_p}(t) = 2^{1-\frac{2}{p}}\left((1-t)^p+t^p\right)^{\frac{2}{p}} .
	$$
\end{proof}
\begin{example}
	Let $X=(\mathbb{R}^{2},\|\cdot\|_{1})$, then $C_{\mathrm{NJ}}(X)=2$.
\end{example}
\begin{proof}
	In Example \ref{3.1}, we have $$L_X(t) = 2t^2-4t+2,$$
	which implies that $$C_{\mathrm{NJ}}(X)=\sup_{\eta\in[0,1]}\left\{\frac{4\left(\frac{1-\eta}{2}-1\right)^2}{1+\eta^2}\right\}=2.$$
\end{proof}
Finally, we estimate the relationship between  $L_X(t)$ and $C_{\mathrm{NJ}}^{\prime}(X), J(X),$ and $\delta_X(\varepsilon)$ in turn.
\begin{proposition}
	Let $X$ be Banach space, then $L_X(t)\geqslant(4t^2-4t+1)C_{\mathrm{NJ}}^{\prime}(X).$
\end{proposition}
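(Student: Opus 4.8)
The plan is to deduce everything from the identity $L_X(t)=\tfrac12\gamma_X(1-2t)$ of Theorem~\ref{1123}, so that the statement reduces to a pure inequality for $\gamma_X$. Writing $s=1-2t$, we have $s\in(0,1]$ and $4t^2-4t+1=(1-2t)^2=s^2$, so the claim $L_X(t)\geqslant(4t^2-4t+1)C_{\mathrm{NJ}}^{\prime}(X)$ is equivalent to $\tfrac12\gamma_X(s)\geqslant s^2C_{\mathrm{NJ}}^{\prime}(X)$. Moreover, comparing the definitions of $\gamma_X$ and $C_{\mathrm{NJ}}^{\prime}$ directly gives $\gamma_X(1)=2C_{\mathrm{NJ}}^{\prime}(X)$, so it suffices to establish the homogeneity-type lower bound $\gamma_X(s)\geqslant s^2\gamma_X(1)$ for every $s\in(0,1]$.

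The crux is the following termwise estimate: for fixed $x,y\in S_X$ and $0<s\leqslant1$,
$$\|x+sy\|^2+\|x-sy\|^2\geqslant s^2\bigl(\|x+y\|^2+\|x-y\|^2\bigr).$$
I would prove this by homogenizing. Set $F(s)=\|x+sy\|^2+\|x-sy\|^2$; then $F(s)/s^2=\bigl\|\tfrac{x}{s}+y\bigr\|^2+\bigl\|\tfrac{x}{s}-y\bigr\|^2$, and after the substitution $r=1/s$ this equals $H(r):=\|rx+y\|^2+\|rx-y\|^2$. The function $H$ is convex in $r$ (each summand is the square of a norm, hence a convex function of $r$) and even, since $H(-r)=H(r)$, so it is non-decreasing on $[0,\infty)$. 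Consequently $F(s)/s^2=H(1/s)$ is non-increasing in $s$ on $(0,\infty)$, and for $s\leqslant1$ we obtain $F(s)/s^2\geqslant H(1)=F(1)$, which is exactly the displayed inequality.

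With this in hand the rest is routine. For every $x,y\in S_X$ the displayed estimate and the definition of $\gamma_X(s)$ give $\gamma_X(s)\geqslant\tfrac12 F(s)\geqslant s^2\cdot\tfrac12\bigl(\|x+y\|^2+\|x-y\|^2\bigr)$; taking the supremum over $x,y\in S_X$ of the right-hand side yields $\gamma_X(s)\geqslant s^2\gamma_X(1)=2s^2C_{\mathrm{NJ}}^{\prime}(X)$. Substituting back through Theorem~\ref{1123} gives $L_X(t)=\tfrac12\gamma_X(1-2t)\geqslant(1-2t)^2C_{\mathrm{NJ}}^{\prime}(X)=(4t^2-4t+1)C_{\mathrm{NJ}}^{\prime}(X)$, as desired.

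The main obstacle is the termwise inequality. The naive route through the triangle inequality, $\|x+sy\|\geqslant s\|x+y\|-(1-s)$, is useless here because the right-hand side can be negative and squaring destroys the bound; the homogenization-convexity argument is what correctly controls the squared norms and supplies the factor $s^2$. Everything else — the reduction via the identity, the observation $\gamma_X(1)=2C_{\mathrm{NJ}}^{\prime}(X)$, and the passage to the supremum — is immediate.
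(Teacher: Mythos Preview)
Your argument is correct. The reduction via Theorem~\ref{1123} to $\gamma_X(s)\geqslant s^2\gamma_X(1)$ (with $s=1-2t$), together with the observation $\gamma_X(1)=2C_{\mathrm{NJ}}^{\prime}(X)$, is valid, and the convexity--evenness argument for the termwise inequality $\|x+sy\|^2+\|x-sy\|^2\geqslant s^2\bigl(\|x+y\|^2+\|x-y\|^2\bigr)$ is clean: $r\mapsto\|rx+y\|^2$ is convex (square of a nonnegative convex function), $H$ is even, hence non-decreasing on $[0,\infty)$, and the monotonicity of $F(s)/s^2=H(1/s)$ follows.

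The paper proceeds differently. It does not invoke Theorem~\ref{1123}; instead it plugs the isosceles-orthogonal pair $(x+y,\,x-y)$ with $x,y\in S_X$ directly into the definition of $L_X(t)$, obtaining (after simplification) the same termwise quantity $\|x+(1-2t)y\|^2+\|x-(1-2t)y\|^2$ over the denominator $4$. The key pointwise bound is then derived by the reverse triangle inequality $\|t(x+y)+(1-t)(x-y)\|\geqslant\bigl|\,t\|x+y\|-(1-t)\|x-y\|\,\bigr|$ (and its companion), squaring, summing, and using $2ab\leqslant a^2+b^2$ to arrive at $(1-2t)^2\cdot\tfrac{\|x+y\|^2+\|x-y\|^2}{4}$. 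So both routes establish the same inequality $F(s)\geqslant s^2F(1)$, but by different mechanisms: your homogenization/convexity argument versus the paper's reverse-triangle plus AM--GM computation. Your approach is more conceptual and in fact yields the stronger monotonicity statement that $s\mapsto\gamma_X(s)/s^2$ is non-increasing; the paper's approach is more elementary and self-contained, not relying on the identity of Theorem~\ref{1123}.
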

\begin{proof}
	For any $x,y \in S_X$, it holds that $x+y\perp_{I}x-y$, then
	$$
	L_{X}(t)\geqslant\frac{\|t (x+y)+(1-t) (x-y)\|^2+\|(1-t) (x+y)+t  (x-y)\|^2}{\|(x+y)+(x-y)\|^2}.
	$$
	Since $$\|t (x+y)+(1-t) (x-y)\|\geqslant \biggl|\|t(x+y)\|-\|(1-t)(x-y)\|\biggr|$$
	and $$\|(1-t) (x+y)+t  (x-y)\|\geqslant \biggl|\|(1-t)(x+y)\|-\|t(x-y)\|\biggr|,$$
	then $$
	\begin{aligned}
		L_{X}(t)&\geqslant\frac{(t^2+(1-t)^2)(\|x+y\|^2+\|x-y\|^2)-4t(1-t)\|x+y\|\|x-y\|}{4}\\
		&\geqslant\frac{(1-2t+t^2)(\|x+y\|^2+\|x-y\|^2)-2t(1-t)(\|x+y\|^2+\|x-y\|^2)}{4}\\
		&=(4t^2-4t+1)\cdot\frac{\|x+y\|^2+\|x-y\|^2}{4},
	\end{aligned}$$
	which implies that
	$	L_{X}(t)\geqslant(4t^2-4t+1)C_{\mathrm{NJ}}^{\prime}(X).$
\end{proof}
\begin{proposition}
	Let $X$ be Banach space, then $$\frac{1}{2}J^2(X)-2tJ(X)+2t^2\leqslant L_X(t)\leqslant\frac{2t^2-2t+1}{4}J^2(X)+(2t-2t^2)J(X)+2t^2-2t+1.$$
\end{proposition}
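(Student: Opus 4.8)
The plan is to reduce both inequalities to estimates on the auxiliary function $\gamma_X$. By Theorem~\ref{1123} we have $L_X(t)=\tfrac12\gamma_X(1-2t)$, so writing $s:=1-2t\in(0,1]$ it suffices to bound $\gamma_X(s)=\sup\{\tfrac12(\|x+sy\|^2+\|x-sy\|^2):x,y\in S_X\}$ above and below in terms of the James constant $J(X)$. Everything rests on two elementary identities: first $x+sy=(x+y)-2ty$ and $x-sy=(x-y)+2ty$; second $x+sy=(1-t)(x+y)+t(x-y)$ and $x-sy=t(x+y)+(1-t)(x-y)$ (these are $\tfrac{1+s}{2}(x+y)+\tfrac{1-s}{2}(x-y)$ and its companion, since $\tfrac{1+s}{2}=1-t$ and $\tfrac{1-s}{2}=t$).

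For the lower bound I would use the representation $J(X)=\sup\{\|x+y\|:x,y\in S_X,\ x\perp_I y\}$ recorded in Section~\ref{sec2}. Fix $x,y\in S_X$ with $x\perp_I y$, so that $\|x+y\|=\|x-y\|$. The first pair of identities and the reverse triangle inequality give $\|x\pm sy\|\geqslant\|x\pm y\|-2t\|y\|=\|x\pm y\|-2t$, whence $\gamma_X(s)\geqslant\tfrac12\big((\|x+y\|-2t)^2+(\|x-y\|-2t)^2\big)=(\|x+y\|-2t)^2$. Letting $\|x+y\|\to J(X)$ (legitimate, and sign-preserving under squaring, since $J(X)-2t\geqslant\sqrt2-1>0$) yields $\gamma_X(s)\geqslant(J(X)-2t)^2$, that is $L_X(t)\geqslant\tfrac12(J(X)-2t)^2=\tfrac12J^2(X)-2tJ(X)+2t^2$.

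For the upper bound I would start from arbitrary $x,y\in S_X$. Since $\|x+sy\|^2+\|x-sy\|^2$ is invariant under $y\mapsto-y$, I may assume $\|x+y\|\leqslant\|x-y\|$, so that $\|x+y\|=\min\{\|x+y\|,\|x-y\|\}\leqslant J(X)$ while $\|x-y\|\leqslant2$. Feeding these into the second pair of identities and applying the triangle inequality (all coefficients are nonnegative because $s\leqslant1$) gives $\|x+sy\|\leqslant(1-t)J(X)+2t$ and $\|x-sy\|\leqslant tJ(X)+2(1-t)$. Squaring, adding and halving bounds $\gamma_X(s)$, so that $L_X(t)=\tfrac12\gamma_X(s)\leqslant\tfrac14\big(((1-t)J(X)+2t)^2+(tJ(X)+2(1-t))^2\big)$; upon expanding, the $J^2$--coefficient is $\tfrac14((1-t)^2+t^2)=\tfrac{2t^2-2t+1}{4}$, the $J$--coefficient is $2t(1-t)=2t-2t^2$, and the constant term is $t^2+(1-t)^2=2t^2-2t+1$, which is exactly the claimed right-hand side.

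The triangle-inequality steps and the concluding expansion are routine; the delicate point is the upper bound, where one must (i) normalize the sign of $y$ so that it is the smaller quantity $\|x+y\|$ that is controlled by $J(X)$, and (ii) observe that the apparently wasteful estimate $\|x-y\|\leqslant2$ is precisely what forces the polynomial in $J(X)$ to collapse to the stated coefficients. I expect the main obstacle to be checking that this particular combination of estimates reproduces exactly $\tfrac{2t^2-2t+1}{4}J^2(X)+(2t-2t^2)J(X)+2t^2-2t+1$ rather than a strictly weaker bound.
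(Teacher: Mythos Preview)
Your proof is correct and, for the upper bound, essentially identical to the paper's: both write $x\pm sy$ as a convex combination of $x+y$ and $x-y$, apply the triangle inequality, and then (after the harmless normalization $\|x+y\|\leqslant\|x-y\|$) plug in $\|x+y\|\leqslant J(X)$ and $\|x-y\|\leqslant 2$. The paper squares first and then bounds $a^2+b^2\leqslant J(X)^2+4$, $ab\leqslant 2J(X)$; you bound first and then square; the resulting polynomial in $J(X)$ is the same either way.

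For the lower bound your route is slightly cleaner than the paper's. The paper works with arbitrary $x,y\in S_X$, bounds $\min\{\|x+y\|,\|x-y\|\}$ by $\min\{\|x+(1-2t)y\|+2t,\|x-(1-2t)y\|+2t\}$, and then needs the Cauchy--Schwarz/QM--AM step $\|x+(1-2t)y\|+\|x-(1-2t)y\|\leqslant 2\sqrt{\gamma_X(1-2t)}$ to close up to $(\sqrt{2L_X(t)}+2t)^2$. By starting from $x\perp_I y$ (so that $\|x+y\|=\|x-y\|$ and the sup still recovers $J(X)$) you avoid that extra inequality and get $\gamma_X(s)\geqslant(\|x+y\|-2t)^2$ in one line. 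Both arguments land on the same bound, but yours is a notch more direct.
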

\begin{proof}
	For any $x,y\in S_X$, since $$\|x+y\|=\|x+(1-2t)y+2ty\|\leqslant\|x+(1-2t)y\|+2t$$ and  $$\|x-y\|=\|x-(1-2t)y-2ty\|\leqslant\|x-(1-2t)y\|+2t,$$
	so we obtain  
	$$
	\begin{aligned}
		& \min \{\|x+y\|,\|x-y\|\}^2 \\
		& \leqslant \min \left\{\|x+(1-2t)y\|+2t,\|x-(1-2t)y\|+2t\right\}^2 \\
		& \leqslant \frac{\left(\|x+(1-2t)y\|+2t\right)^2+\left(\|x-(1-2t)y\|+2t\right)^2}{2} \\
		& =\frac{\|x+(1-2t)y\|^2+\|x-(1-2t)y\|^2}{2}+\frac{4t(\|x+(1-2t)y\|+\|x-(1-2t)y\|)}{2}+4t^2 \\
		& \leqslant \gamma_X\left(1-2t\right)+4t \sqrt{\gamma_X\left(1-2t\right)}+4t^2\\
		&=\left(\sqrt{\gamma_X\left(1-2t\right)}+2t\right)^2,
	\end{aligned}
	$$
	which implies that $J(X)\leqslant\sqrt{\gamma_X\left(1-2t\right)}+2t=\sqrt{2L_X(t)}+2t$, namely
	$$\frac{1}{2}J^2(X)-2tJ(X)+2t^2\leqslant L_X(t).$$
	On the other hand, since $$\|x+\alpha y\|=\left\|\frac{1+\alpha}{2}(x+y)+\frac{1-\alpha}{2}(x-y)\right\| \leqslant \frac{1+\alpha}{2}\|x+y\|+\frac{1-\alpha}{2}\|x-y\|$$ and $$\|x-\alpha y\|=\left\|\frac{1-\alpha}{2}(x+y)+\frac{1+\alpha}{2}(x-y)\right\| \leqslant \frac{1-\alpha}{2}\|x+y\|+\frac{1+\alpha}{2}\|x-y\|,$$ we have
	$$
	\begin{aligned}
		\|x+\alpha y\|^2+\|x-\alpha y\|^2 &\leqslant \frac{1+\alpha^2}{2}\left(\|x+y\|^2+\|x-y\|^2\right)+\left(1-\alpha^2\right)\|x+y\|\|x-y\| \\
		& \leqslant \frac{1+\alpha^2}{2}\left(J^2(X)+4\right)+2\left(1-\alpha^2\right) J(X),
	\end{aligned}
	$$
	which implies that $\gamma_X\left(1-2t\right) \leqslant \frac{4t^2-4t+2}{4} J^2(X)+(4t-4t^2)J(X)+4t^2-4t+2$. Hence $$L_X(t)\leqslant\frac{2t^2-2t+1}{4}J^2(X)+(2t-2t^2)J(X)+2t^2-2t+1.$$
\end{proof}
\begin{theorem}\label{11}
	Let $X$ be Banach space, then $$ \begin{aligned}
		\frac{1}{2} \left(1-2t\right)^2 \left(\frac{\varepsilon}{2}-\delta_X(\varepsilon)+1\right)^2&\leqslant L_X(t)\\
		&\leqslant\left(2t^2-2t+1\right)\left(1-\delta_X(\varepsilon)\right)^2+2t(1-t) \varepsilon\left(1-\delta_X(\varepsilon)\right)+\frac{2t^2-2t+1}{4} \varepsilon^2.\end{aligned}$$
\end{theorem}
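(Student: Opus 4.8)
The plan is to route everything through Theorem~\ref{1123}, which lets me write $L_X(t)=\tfrac12\gamma_X(s)$ with $s:=1-2t\in(0,1]$, and then to link $\gamma_X(s)$ to the modulus of convexity through the two algebraic identities
$$x\pm sy=\frac{1\pm s}{2}(x+y)+\frac{1\mp s}{2}(x-y),\qquad x+y=\frac{1+s}{2s}(x+sy)+\frac{s-1}{2s}(x-sy),\quad x-y=\frac{s-1}{2s}(x+sy)+\frac{1+s}{2s}(x-sy).$$
The first set expresses $x\pm sy$ as convex-type combinations of the sum and difference vectors (useful for the upper bound), while the second inverts the relation (useful for the lower bound). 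Throughout I will also use $t(x+y)+(1-t)(x-y)=x-sy$ and $(1-t)(x+y)+t(x-y)=x+sy$.

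For the lower bound I would fix $\varepsilon\in[0,2]$ and exploit that $\delta_X(\varepsilon)$ is an infimum: for any $\eta>0$ there are $x,y\in S_X$ with $\|x-y\|=\varepsilon$ and $\|x+y\|>2(1-\delta_X(\varepsilon))-\eta$. Since $\|(x+y)\pm(x-y)\|=\|2x\|=\|2y\|=2$, the pair $x+y,\,x-y$ is isosceles orthogonal, so feeding it into the definition of $L_X(t)$ gives $L_X(t)\geqslant\frac{\|x-sy\|^2+\|x+sy\|^2}{4}$. The key estimate is $\|x+sy\|+\|x-sy\|\geqslant s\big(\|x+y\|+\|x-y\|\big)$, which I get by applying the triangle inequality to the second pair of identities above and adding. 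Combining this with $a^2+b^2\geqslant\tfrac12(a+b)^2$ and then letting $\eta\to0$ yields $L_X(t)\geqslant\frac{s^2}{2}\big(\tfrac{\varepsilon}{2}-\delta_X(\varepsilon)+1\big)^2$, which is exactly the lower bound.

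For the upper bound I would take $x,y\in S_X$ and put $\varepsilon=\|x-y\|$, so that $\|x+y\|\leqslant 2(1-\delta_X(\varepsilon))$ by definition of the modulus of convexity. The first identity and the triangle inequality give $\|x+sy\|\leqslant(1+s)(1-\delta_X(\varepsilon))+\tfrac{1-s}{2}\varepsilon$ and $\|x-sy\|\leqslant(1-s)(1-\delta_X(\varepsilon))+\tfrac{1+s}{2}\varepsilon$. Squaring and adding, the $(1\pm s)^2$ terms sum to $2(1+s^2)$ and the cross terms to $2(1-s^2)$, producing $\frac{\|x+sy\|^2+\|x-sy\|^2}{2}\leqslant(1+s^2)(1-\delta_X(\varepsilon))^2+(1-s^2)\varepsilon(1-\delta_X(\varepsilon))+\tfrac{1+s^2}{4}\varepsilon^2$. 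Rewriting $\tfrac{1+s^2}{2}=2t^2-2t+1$ and $\tfrac{1-s^2}{2}=2t(1-t)$ and halving via Theorem~\ref{1123} gives the claimed right-hand side.

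The main obstacle I anticipate is the \emph{asymmetric role of $\varepsilon$} in the two estimates. The lower bound is genuinely uniform: it holds for every $\varepsilon\in[0,2]$, since I can manufacture an admissible pair at each $\varepsilon$. The upper estimate, however, is a pointwise bound in which $\varepsilon$ must be read as the isosceles distance $\|x-y\|$ of the pair at hand; a single fixed small $\varepsilon$ cannot control the full supremum defining $\gamma_X(s)$ (for instance in $(\mathbb{R}^2,\|\cdot\|_1)$ of Example~\ref{3.1} one has $\delta_X\equiv0$, so letting $\varepsilon\to0$ in the right-hand side gives $2t^2-2t+1$, which is strictly below $L_X(t)=2t^2-4t+2$). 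Hence the delicate step is the passage to the supremum: the correct reading is that the upper inequality is to be applied with $\varepsilon$ equal to the (isosceles) distance realized by the near-extremal pair for $\gamma_X(s)$, and care must be taken not to treat that $\varepsilon$ as an independent free parameter. A secondary, routine point is the $\eta\to0$ limiting argument that converts the infimum in $\delta_X(\varepsilon)$ into the desired lower bound.
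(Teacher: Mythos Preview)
Your approach matches the paper's: both route through Theorem~\ref{1123} to reduce to $\gamma_X(1-2t)$, and then use the identities $x\pm sy=\tfrac{1\pm s}{2}(x+y)+\tfrac{1\mp s}{2}(x-y)$ to compare with $\|x+y\|$ and $\|x-y\|$. The only cosmetic difference is in the lower bound: the paper first shows $\gamma_X(s)\geqslant s^{2}\bigl(\rho_X(1)+1\bigr)^{2}$ and then invokes the Lindenstrauss formula $\rho_X(1)=\sup_{0\leqslant\varepsilon\leqslant 2}\bigl\{\tfrac{\varepsilon}{2}-\delta_X(\varepsilon)\bigr\}$, whereas you construct a near-optimal isosceles pair directly from the definition of $\delta_X(\varepsilon)$; the two routes produce the identical inequality.

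Your concern about the upper bound is well-founded and is not a defect of your argument but of the statement itself. The paper's own proof writes ``for any $x,y\in S_X$, it holds that $\|x+y\|\leqslant 2-2\delta_X(\varepsilon)$'' and simultaneously replaces $\|x-y\|$ by $\varepsilon$, without ever specifying how $\varepsilon$ is tied to the pair, and then passes to the supremum defining $\gamma_X$ as though the estimate were uniform in $x,y$. As you point out with $(\mathbb{R}^2,\|\cdot\|_1)$, the resulting inequality cannot hold for every $\varepsilon\in[0,2]$: taking $\varepsilon=0$ would force $L_X(t)\leqslant 2t^2-2t+1$, contradicting $L_X(t)=2(1-t)^2$ there. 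So the paper has exactly the gap you anticipate; the upper inequality is only meaningful when $\varepsilon$ is read as the distance $\|x-y\|$ of a (near-)extremal pair for $\gamma_X(1-2t)$, not as a free parameter. Your diagnosis is correct; the paper simply does not address it.
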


\begin{proof} [Proof of Theorem~{\upshape\ref{11}}]
	For any $x,y\in S_X$, it holds that
	$$
	\begin{aligned}
		\gamma_X(t) & \geqslant\frac{\|x+t y\|^2+\|x-t y\|^2}{2} \\
		& \geqslant \frac{1+t^2}{4}\left(\|x+y\|^2+\|x-y\|^2\right)-\frac{\left(1-t^2\right)\|x+y\|\|x-y\|}{2} \\
		& \geqslant t^2\left(\frac{\|x+y\|+\|x-y\|}{2}\right)^2.
	\end{aligned}
	$$
	Combined with the definition of the constant $\rho_X(1)$, we have $\gamma_X(t) \geqslant t^2\left(\rho_X(1)+1\right)^2$, hence
	$$
	\ L_X(t)=\frac{1}{2} \gamma_X\left(1-2t\right) \geqslant \frac{1}{2}(1-2t)^2\left(\rho_X(1)+1\right)^2 .
	$$
	In  \cite{07}, it is noted that $\rho_X(1)=\sup \left\{\frac{\varepsilon}{2}-\delta_X(\varepsilon): 0 \leqslant \varepsilon \leqslant 2\right\}$, so we obtain
	$$
	L_X(t) \geqslant \frac{1}{2} \left(1-2t\right)^2 \left(\frac{\varepsilon}{2}-\delta_X(\varepsilon)+1\right)^2 .
	$$
	On the other hand, for any $x, y \in S_X$,	it holds that $\|x+y\| \leqslant 2-2 \delta_X(\varepsilon)$ , then
	$$
	\begin{aligned}
		\frac{\|x+t y\|^2+\|x-t y\|^2}{2}  &\leqslant \frac{1+t^2}{4}\left(\|x+y\|^2+\|x-y\|^2\right)+\frac{1-t^2}{2}\|x+y\|\|x-y\| \\
		& \leqslant \frac{1+t^2}{4}\left(4\left(1-\delta_X(\varepsilon)\right)^2+\varepsilon^2\right)+\left(1-t^2\right)\left(1-\delta_X(\varepsilon)\right)\varepsilon,
	\end{aligned}
	$$
	which implies that $\gamma_X(t) \leqslant\left(1+t^2\right)\left(1-\delta_X(\varepsilon)\right)^2+\left(1-t^2\right)\varepsilon\left(1-\delta_X(\varepsilon)\right) +\frac{1+t^2}{4} \varepsilon^2$. 
	Therefore, $$L_X(t)=\frac{1}{2} \gamma_X\left(1-2t\right) \leqslant \left(2t^2-2t+1\right)\left(1-\delta_X(\varepsilon)\right)^2+2t(1-t) \varepsilon\left(1-\delta_X(\varepsilon)\right)+\frac{2t^2-2t+1}{4} \varepsilon^2.$$
\end{proof}

\section{Some Geometric Properties Associated with $L_X(t)$}
First, we use the lower bound of the geometric constant $L_X(t)$ to characterize Hilbert spaces, showing that the lower bound of $L_X(t)$ is also sharp, and provid three equivalent forms. Here, we introduce two lemmas that will be used in the proof.
\begin{lemma}
	\rm\cite{17}	A normed space $X$ is an inner product space if and only if $x \perp_I y \Longleftrightarrow x \perp_P y$ for all $x, y\in X$.
\end{lemma}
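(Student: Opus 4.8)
The plan is to prove the two implications of the equivalence separately, treating the forward direction (inner product space $\Rightarrow$ coincidence of the two orthogonalities) as routine and concentrating the effort on the converse.

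First I would dispose of the forward direction. If the norm of $X$ is induced by an inner product $\langle\cdot,\cdot\rangle$, then expanding $\|x\pm y\|^2=\|x\|^2\pm 2\langle x,y\rangle+\|y\|^2$ shows at once that $x\perp_I y$ (i.e. $\|x+y\|=\|x-y\|$) and $x\perp_P y$ (i.e. $\|x-y\|^2=\|x\|^2+\|y\|^2$) are \emph{each} equivalent to $\langle x,y\rangle=0$; hence $x\perp_I y\Leftrightarrow x\perp_P y$ for all $x,y$.

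For the converse, I assume $x\perp_I y\Leftrightarrow x\perp_P y$ for all $x,y$ and aim to verify the parallelogram law, so that the Jordan--von Neumann theorem furnishes an inner product. The first observation is that whenever $x\perp_I y$, applying the hypothesis to both $y$ and $-y$ (isosceles orthogonality being invariant under $y\mapsto -y$) forces the strengthened identity $\|x+y\|^2=\|x-y\|^2=\|x\|^2+\|y\|^2$. The decisive step is then to upgrade this to full homogeneity, i.e. to show that $x\perp_I y$ implies $(\alpha x)\perp_I(\beta y)$ for all scalars $\alpha,\beta$. Granting homogeneity, the parallelogram law follows by a short computation: given arbitrary $u,v$, use the existence property of isosceles orthogonality (for each pair there is a scalar $s$ with $u\perp_I(su+v)$) to write $v=w-su$ with $u\perp_I w$; then homogeneity together with the strengthened Pythagorean identity gives $\|u+v\|^2+\|u-v\|^2=\|(1-s)u+w\|^2+\|(1+s)u-w\|^2=2(1+s^2)\|u\|^2+2\|w\|^2$, while the same ingredients yield $2\|u\|^2+2\|v\|^2=2\|u\|^2+2\|w-su\|^2=2(1+s^2)\|u\|^2+2\|w\|^2$, and the two sides agree.

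The hard part will be establishing the homogeneity of $\perp_I$ from the bare coincidence of the two relations. Isosceles orthogonality is genuinely non-homogeneous in a general normed space, so this cannot be purely formal: the argument must exploit the existence property in tandem with the continuity and monotonicity of the maps $t\mapsto\|x\pm ty\|$ to pin down the scaling behaviour, and it is precisely here that the coincidence hypothesis does the work. A cleaner alternative is to bypass the explicit computation above by invoking the classical theorem of James, that homogeneity (equivalently additivity) of isosceles orthogonality already characterizes inner product spaces; under that route the whole task reduces to deriving homogeneity, with the low-dimensional case possibly handled separately. I expect this homogeneity upgrade to be the technical heart of the proof.
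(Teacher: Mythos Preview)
The paper does not prove this lemma at all: it is quoted from \cite{17} as a known result and used as a black box in the proof of Theorem~\ref{thm8}. So there is no ``paper's proof'' to compare against, and your proposal is already more than the paper offers.

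That said, your plan takes an unnecessarily circuitous route in the converse direction. You correctly extract the strengthened identity $\|x+y\|^2=\|x-y\|^2=\|x\|^2+\|y\|^2$ whenever $x\perp_I y$, but then you aim to upgrade $\perp_I$ to a homogeneous relation and invoke James' characterization, honestly flagging this upgrade as the unfinished ``technical heart.'' It can be bypassed entirely. Observe that for \emph{any} $x,y$ with $\|x\|=\|y\|$ one has $(x+y)\perp_I(x-y)$, since $\|(x+y)\pm(x-y)\|=2\|x\|=2\|y\|$. Your strengthened identity then gives $\|2x\|^2=\|x+y\|^2+\|x-y\|^2$, i.e.\ the parallelogram law for all equal-norm pairs (equivalently, on $S_X$). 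This already characterizes inner product spaces; indeed it is exactly the $a=b=1$, $\sim$ equal to $=$ case of the paper's Lemma~\ref{Lemma2}. So the converse falls out in one line from ingredients you already have, and the homogeneity detour---with its genuinely delicate unfinished step---is not needed.
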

\begin{lemma}\label{Lemma2}
	\rm\cite{16}	A normed space $X$ is an inner	product space if and only if for any $x, y \in S_X$, there exist $a, b \neq 0$ such that
	$$
	\|a x+b y\|^2+\|a x-b y\|^2 \sim 2\left(a^2+b^2\right)
	$$
	where $\sim$ stands for $=, \leqslant$ or $\geqslant$.
\end{lemma}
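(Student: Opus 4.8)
The plan is to prove both implications of this Jordan--von Neumann type characterization, handling the three relations $=,\leqslant,\geqslant$ in parallel. First I would dispose of the forward implication. If $X$ is an inner product space, then the parallelogram law gives, for every $x,y\in S_X$ and all scalars $a,b$,
\[
\|ax+by\|^2+\|ax-by\|^2=2\bigl(\|ax\|^2+\|by\|^2\bigr)=2(a^2+b^2),
\]
so choosing any $a,b\neq 0$ yields equality, and in particular all three relations hold. Thus only the converse requires work.

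For the converse it is convenient to normalize. Dividing by $a^2$ and writing $s=b/a\neq 0$, the hypothesis becomes the assertion that for every $x,y\in S_X$ there is an $s=s(x,y)\neq 0$ with $h_{x,y}(s)\leqslant 0$ (and analogously $=0$ or $\geqslant 0$ for the other two relations), where
\[
h_{x,y}(s):=\|x+sy\|^2+\|x-sy\|^2-2(1+s^2).
\]
Since being an inner product space is decided on two--dimensional subspaces (apply the parallelogram--law criterion to the subspace spanned by any pair), and since the hypothesis is inherited by every subspace $Y$ (for $x,y\in S_Y$ the vectors $ax\pm by$ again lie in $Y$, with the same norms), I would reduce at once to $\dim X=2$. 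The goal is then to show $h_{x,y}\equiv 0$ for all unit $x,y$, for then the full parallelogram law holds and $X$ is an inner product space.

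Here I would record the structural facts about $h_{x,y}$: it is even in $s$ with $h_{x,y}(0)=0$; it is continuous; it is the difference of the convex map $s\mapsto\|x+sy\|^2+\|x-sy\|^2$ (a sum of squares of nonnegative convex functions) and the parabola $2(1+s^2)$; and $h_{x,y}(s)/s^2\to 0$ as $s\to\infty$, since $\|x\pm sy\|^2=s^2+O(s)$. The hypothesis supplies, for each pair, a single nonzero scale $s(x,y)$ at which $h_{x,y}$ has the prescribed sign; together with evenness this gives agreement or a sign at $\pm s(x,y)$ and $0$. The decisive step is to upgrade this one-scale information to $h_{x,y}\equiv 0$. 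I would attempt this by propagation: feeding the normalized pairs built from $x+s(x,y)y$ and $x-s(x,y)y$ back into the hypothesis generates new scales carrying the prescribed sign, and, combined with the convexity and the asymptotics above, one aims to force $h_{x,y}$ to vanish identically.

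The main obstacle is precisely this last step: eliminating the dependence of the scale $s$ on the pair $(x,y)$. A relation holding at merely one scale per pair is a priori far weaker than the global parallelogram law, so a direct appeal to $\gamma_X$ fails, because $\gamma_X(s)=\sup_{x,y}\tfrac12\bigl(\|x+sy\|^2+\|x-sy\|^2\bigr)$ requires a \emph{common} $s$ for all pairs; likewise, mere convexity of the norm-squared profile does not pin it down from three agreement points. One could instead route through the preceding isosceles--Pythagorean lemma: from an equality $h_{x,y}=0$ at the scale $(a,b)$ one checks that the scaled vectors $u=ax,\ v=by$ satisfy $u\perp_I v\Rightarrow u\perp_P v$, but converting this into the full equivalence $\perp_I\Leftrightarrow\perp_P$ again demands the missing scale-independence. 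I therefore expect the technical heart to be a careful continuity and extremal analysis of the convex profile that propagates the prescribed sign from one scale to all scales, which is the content secured in the cited source.
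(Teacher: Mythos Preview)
The paper does not prove this lemma; it is quoted from Ben\'{\i}tez--del R\'{\i}o and used as a black box in the proof of the Hilbert-space characterization theorem. So there is no ``paper's own proof'' to compare against here.

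Your proposal is not a complete proof either, and you say so yourself. The forward direction is fine. For the converse you correctly normalize, reduce to two dimensions, and isolate the auxiliary function $h_{x,y}(s)=\|x+sy\|^2+\|x-sy\|^2-2(1+s^2)$. But you then name the genuine obstacle --- the scale $s=s(x,y)$ depends on the pair $(x,y)$ --- and do not overcome it. Your closing paragraph says, in effect, ``the technical heart is a propagation/extremal argument secured in the cited source,'' which is a deferral, not an argument. The structural facts you record (evenness, $h_{x,y}(0)=0$, continuity, convex-minus-parabola form, asymptotics $h_{x,y}(s)/s^2\to 0$) are correct and relevant, but by themselves they do not force $h_{x,y}\equiv 0$ from a single prescribed sign at one pair-dependent scale; the propagation scheme you sketch (feeding the normalized vectors $x\pm s(x,y)y$ back into the hypothesis) is not actually carried out, and you give no reason it closes.

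It is also worth noting that the paper's application of this lemma uses \emph{fixed} scalars $a=1$, $b=1-2t_0$ working simultaneously for all $x,y\in S_X$. That uniform version is strictly easier than the pair-dependent statement of the lemma and amounts to $\gamma_X(1-2t_0)\leqslant 1+(1-2t_0)^2$, whose equivalence with ``$X$ is an inner product space'' is already listed among the properties of $\gamma_X$ in the preliminaries. So even if you cannot recover the full Ben\'{\i}tez--del R\'{\i}o result, the weaker uniform-scale version suffices for everything the present paper needs.
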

\begin{theorem}\label{thm8} Let $X$ be Banach space, then the following conditions are equivalent:\\
	\rm{(i)} $X$ is Hilbert space.\\
	\rm{(ii)}  $L_X(t)= 2t^2-2t+1$ for any $t\in [0,\frac{1}{2})$.\\
	\rm{(iii)} $L_X(t)= 2t_0^2-2t_0+1$ for some $t_0\in [0,\frac{1}{2})$.
\end{theorem}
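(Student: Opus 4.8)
\textit{Proof proposal.} The plan is to prove the chain of implications (i)$\Rightarrow$(ii)$\Rightarrow$(iii)$\Rightarrow$(i), leaning throughout on the identity $L_X(t)=\tfrac12\gamma_X(1-2t)$ from Theorem~\ref{1123}. The first observation I would record is the algebraic fact that, setting $s=1-2t$, one has $2t^2-2t+1=\tfrac{1+(1-2t)^2}{2}=\tfrac{1+s^2}{2}$. Consequently the equation ``$L_X(t)=2t^2-2t+1$'' is, via Theorem~\ref{1123}, exactly equivalent to ``$\gamma_X(1-2t)=1+(1-2t)^2$''. This reduction is the organizing idea, since it converts every condition in the theorem into a statement about $\gamma_X$ evaluated at the point $s=1-2t$, where the sharp lower bound $\gamma_X(s)\geqslant 1+s^2$ from the preliminaries is available.

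For (i)$\Rightarrow$(ii), if $X$ is a Hilbert space then property (ii) of $\gamma_X$ gives $\gamma_X(s)=1+s^2$ for all $s\in[0,1]$; substituting $s=1-2t$ into $L_X(t)=\tfrac12\gamma_X(1-2t)$ yields $L_X(t)=\tfrac{1+(1-2t)^2}{2}=2t^2-2t+1$ for every $t\in[0,\tfrac12)$. The implication (ii)$\Rightarrow$(iii) is immediate, simply by choosing any single $t_0\in[0,\tfrac12)$.

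The substantive implication is (iii)$\Rightarrow$(i). Given $L_X(t_0)=2t_0^2-2t_0+1$ for one $t_0\in[0,\tfrac12)$, set $s_0=1-2t_0$; since $t_0<\tfrac12$ we have $s_0\in(0,1]$, so $s_0\neq0$. By the reduction above this single value forces $\gamma_X(s_0)=1+s_0^2$. Now I would exploit that $\gamma_X(s_0)$ is a supremum: because $\gamma_X(s_0)=1+s_0^2$ and the supremum is attained at this value, every pair $x,y\in S_X$ must satisfy $\tfrac12\bigl(\|x+s_0y\|^2+\|x-s_0y\|^2\bigr)\leqslant 1+s_0^2$, that is, $\|x+s_0y\|^2+\|x-s_0y\|^2\leqslant 2\bigl(1+s_0^2\bigr)$ for all $x,y\in S_X$. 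This is precisely the hypothesis of Lemma~\ref{Lemma2} with the fixed choice $a=1$, $b=s_0\neq0$ and $\sim$ taken to be ``$\leqslant$''. Lemma~\ref{Lemma2} then delivers that $X$ is an inner product space, hence a Hilbert space, completing the cycle.

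The main obstacle is the passage from a \emph{single} parameter value in (iii) to the full Hilbert-space conclusion; the naive expectation would be that one needs $\gamma_X(s)=1+s^2$ on an entire interval. The key that unlocks this is the one-parameter parallelogram characterization of Lemma~\ref{Lemma2}, which only requires one admissible ratio $a:b$, matched against the universally valid lower bound $\gamma_X\geqslant 1+s^2$ so that equality at $s_0$ upgrades to a pointwise inequality for all unit vectors. I would take care to note that the condition $t_0<\tfrac12$ is exactly what guarantees $b=s_0>0$ (so $b\neq0$), which is needed to invoke the lemma; the boundary case $t_0=0$ gives $s_0=1$ and reduces to the classical parallelogram law, posing no difficulty.
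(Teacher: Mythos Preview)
Your argument is correct. The crucial implication (iii)$\Rightarrow$(i) is handled in both proofs by the same mechanism: one arrives at the single-parameter parallelogram inequality $\|x+s_0y\|^2+\|x-s_0y\|^2\leqslant 2(1+s_0^2)$ for all $x,y\in S_X$ with $s_0=1-2t_0\neq0$, and then invokes Lemma~\ref{Lemma2} with $a=1$, $b=s_0$. The only difference is in the routing. You funnel every step through the identity $L_X(t)=\tfrac12\gamma_X(1-2t)$ of Theorem~\ref{1123} and the known properties of $\gamma_X$; the paper instead works directly from the definition of $L_X$, substituting the isoscelesly orthogonal pair $(x+y,\,x-y)$ to derive the same inequality (which, of course, is precisely the computation hidden inside Theorem~\ref{1123}). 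For (i)$\Rightarrow$(ii) the paper performs a direct inner-product calculation using Lemma~1 (isosceles $\Leftrightarrow$ Pythagorean orthogonality in inner product spaces), while you quote the Hilbert-space value $\gamma_X(s)=1+s^2$; your version is a bit more economical, the paper's is more self-contained. Either way the logical content is the same.
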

\begin{proof} [Proof of Theorem~{\upshape\ref{thm8}}]
	 Assuming (i) holds, then for any two non-zero vectors $x,y\in X$ such that $x\perp_{I}y$,  it follows that $x\perp_{P}y$, namely $\|x+y\|^2=\|x\|^2+\|y\|^2$. Therefore
	$$
	\begin{aligned}
		\|t x+(1-t) y\|^2+\|(1-t) x+t y\|^2&=\|tx\|^2+\|(1-t)y\|^2+\|(1-t)x\|^2+\|ty\|^2+4t(1-t)\langle x,y \rangle \\
		&=(2t^2-2t+1)(\|x\|^2+\|y\|^2),
	\end{aligned}
	$$
	which implie that 
	$$
	\begin{aligned}
		\frac{\|t x+(1-t) y\|^2+\|(1-t) x+t y\|^2}{\|x+y\|^2}&=\frac{(2t^2-2t+1)(\|x\|^2+\|y\|^2)}{\|x\|^2+\|y\|^2}\\
		&=2t^2-2t+1.
	\end{aligned}
	$$
	Assuming (ii) holds, then (iii) clearly holds.\\
	Assuming (iii) holds, then  for any $x,y \in S_X$, it holds that $x+y\perp_{I} x-y$. Therefore
	$$\frac{\|t_0 (x+y)+(1-t_0) (x-y)\|^2+\|(1-t_0) (x+y)+t_0  (x-y)\|^2}{\|(x+y)+(x-y)\|^2}\leqslant 2t_0^2-2t_0+1,$$
	namely $\|x-(1-2t_0)y\|^2+\|x+(1-2t_0)y\|^2\leqslant 4(2t_0^2-2t_0+1)$ is established. Let
	$a=1, b=(1-2t_0)$,  then $a, b\neq0$ and $\|ax+by\|^2+\|ax-by\|^2\leqslant 2(a^2+b^2)$. Combining Lemma \ref{Lemma2}, then (i) holds.
\end{proof}
Next, we demonstrate that if $X$ is not a uniformly non-square, $L_X(t)$ attains its upper bound. Furthermore, when $L_X(t_0)$ is less than a certain value for some $t_0$, $X$ exhibits a (uniformly) normal structure.
\begin{theorem}\label{123} Let $X$ be Banach space, then\\	
	\rm{(i)} If $X$ is not uniformly non-square, then $L_X(t)=2t^2-4t+2$ for any $t \in[0,\frac{1}{2})$.\\
	\rm{(ii)} If $L_X(t_0)<\frac{4t_0^2-8t_0+5}{4}$ for some $t_0 \in[0,\frac{1}{2})$, then $X$ has uniform normal structure.\\
	\rm{(iii)} If $L_X\left(t_0\right)<\frac{9\left(1-2t_0\right)^2}{8}$ for some $t_0 \in[0,\frac{1}{2})$, then $X$ has normal structure.
\end{theorem}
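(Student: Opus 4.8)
The plan is to obtain all three parts by specializing estimates already proved in the paper, the identity $L_X(t)=\tfrac12\gamma_X(1-2t)$ of Theorem~\ref{1123} doing most of the translation work. For (i), I would first note that failure of uniform non-squareness forces $J(X)=2$: by definition, for each $\varepsilon>0$ there exist $x,y\in S_X$ with $\min\{\|x+y\|,\|x-y\|\}>2-\varepsilon$, so $J(X)\geqslant 2-\varepsilon$ for every $\varepsilon$ and hence $J(X)=2$. Feeding $J(X)=2$ into the lower James-constant estimate $\tfrac12 J^2(X)-2tJ(X)+2t^2\leqslant L_X(t)$ gives $L_X(t)\geqslant 2t^2-4t+2$; since the upper bound $L_X(t)\leqslant 2t^2-4t+2$ was already established, equality holds for every $t\in[0,\tfrac12)$.

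For (ii), the approach is a direct rewriting of the sufficient condition recalled in Section~\ref{sec2}, namely that $2\gamma_X(s)<1+(1+s)^2$ for some $s\in(0,1]$ implies uniform normal structure. Putting $s=1-2t_0$, which lies in $(0,1]$ because $t_0<\tfrac12$, the identity gives $\gamma_X(s)=2L_X(t_0)$, so the hypothesis $L_X(t_0)<\tfrac{4t_0^2-8t_0+5}{4}$ becomes, after multiplying by $4$, precisely $2\gamma_X(s)<4t_0^2-8t_0+5=1+(2-2t_0)^2=1+(1+s)^2$. The only verification needed is the elementary identity $1+(1+s)^2=4t_0^2-8t_0+5$, after which the quoted criterion yields uniform normal structure.

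For (iii), I would evaluate the lower bound of Theorem~\ref{11} at $\varepsilon=1$, giving $L_X(t_0)\geqslant\tfrac12(1-2t_0)^2\bigl(\tfrac32-\delta_X(1)\bigr)^2$. Combining this with the hypothesis $L_X(t_0)<\tfrac{9(1-2t_0)^2}{8}$ and cancelling the positive factor $\tfrac12(1-2t_0)^2$ (positive since $t_0<\tfrac12$) leaves $\bigl(\tfrac32-\delta_X(1)\bigr)^2<\tfrac94$, which forces $\delta_X(1)>0$. The stated property that $\delta_X(1)>0$ implies normal structure then completes the argument.

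The computations are short, so the principal obstacle is not calculational but rather choosing, in each part, the correct previously-proved estimate together with the precise specialization ($J(X)=2$, the substitution $s=1-2t_0$, and the choice $\varepsilon=1$) so that the threshold constants match exactly. Part (iii) is the most delicate: one must recognize that the bound $\tfrac{9(1-2t_0)^2}{8}$ is calibrated exactly to force $\delta_X(1)>0$ through the $\rho_X(1)$--$\delta_X$ relationship underlying Theorem~\ref{11}, and keep careful track of the inequality direction and of the cancellation of $(1-2t_0)^2$.
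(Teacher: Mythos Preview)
Your proposal is correct, and parts (ii) and (iii) follow exactly the paper's argument: rewriting the hypothesis via $\gamma_X(1-2t_0)=2L_X(t_0)$ to invoke the $\gamma_X$ criterion for uniform normal structure, and using the lower bound of Theorem~\ref{11} (at $\varepsilon=1$) to force $\delta_X(1)>0$.

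For part (i) there is a minor route difference worth noting. The paper proceeds in one line by asserting directly that if $X$ is not uniformly non-square then $\gamma_X(s)=(1+s)^2$, and then applies Theorem~\ref{1123}. You instead pass through the James constant: $J(X)=2$, then the lower estimate $\tfrac12 J^2(X)-2tJ(X)+2t^2\leqslant L_X(t)$ from the James-constant Proposition, and finally the general upper bound $L_X(t)\leqslant 2t^2-4t+2$. Both arguments are short and valid; yours has the slight advantage of being fully grounded in results explicitly proved in the paper, whereas the paper's one-line version invokes the (standard but unproved here) fact that $\gamma_X$ attains its maximal value throughout when uniform non-squareness fails.
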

\begin{proof} [Proof of Theorem~{\upshape\ref{123}}] (i) Since $X$ is not uniformly non-square, then $\gamma_X\left(1-2t\right)=\left(1+(1-2t)\right)^2=4(1-t)^2$, namely, $L_X(t)=\frac{1}{2} \gamma_X\left(1-2t\right)=2t^2-4t+2$.\\
	(ii) Since $L_X(t_0)<\frac{4t_0^2-8t_0+5}{4}$, then
	$
	2 \gamma_X\left(1-2t\right)=4L_X(t)<4t_0^2-8t_0+5=1+\left(1+(1-2t_0)\right)^2,
	$
	which implies that $X$ has uniform normal structure.\\
	(iii) Since $L_X\left(t_0\right)<\frac{9\left(1-2t_0\right)^2}{8}$, then $\frac{1}{2}(1-2t_0)^2\left(\frac{\varepsilon}{2}-\delta_X(\varepsilon)+1\right)^2<\frac{9\left(1-2t_0\right)^2}{8}$, namely, $\delta_X(\varepsilon)>\frac{\varepsilon-1}{2}$, then $\delta_X(1)>0$. Therefore, $X$ has normal structure.
\end{proof}
Now, we  make use of the following lemma, which is indispensable in the proof of Theorem \ref{Theorem5.3}.
\begin{lemma}\label{Lemma5.3}
	\rm\cite{06} Let $X$ be  Banach space and $x, y \in X$. If $x \perp_{I} y$, then the following inequalities hold.\\
	(i) $\|x+\alpha y\| \leqslant|\alpha|\|x \pm y\|$ and $\|x \pm y\| \leqslant\|x+\alpha y\|$, when $|\alpha| \geqslant 1$.\\
	(ii) $\|x+\alpha y\| \leqslant\|x \pm y\|$ and $|\alpha|\|x \pm y\| \leqslant\|x+\alpha y\|$, when $|\alpha| \leqslant 1$.
\end{lemma}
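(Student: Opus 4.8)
The plan is to reduce all four inequalities to a single algebraic decomposition of $x+\alpha y$ together with the triangle and reverse triangle inequalities. The only structural hypothesis I will use is isosceles orthogonality, and I will use it solely through the identity $\|x+y\|=\|x-y\|$; accordingly, set $u=x+y$, $v=x-y$ and $r:=\|u\|=\|v\|$. The key observation is that for every real $\alpha$ one has $x+\alpha y=\tfrac{1+\alpha}{2}u+\tfrac{1-\alpha}{2}v$, which is immediate upon expanding the right-hand side. Thus $\|x+\alpha y\|$ is trapped between the triangle-inequality upper bound and the reverse-triangle-inequality lower bound applied to this two-term combination, and the four assertions will emerge by reading off the coefficient in each regime $|\alpha|\leqslant 1$ and $|\alpha|\geqslant 1$.

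For the upper bounds I would apply the triangle inequality to obtain $\|x+\alpha y\|\leqslant \tfrac{|1+\alpha|}{2}\|u\|+\tfrac{|1-\alpha|}{2}\|v\|=\tfrac{1}{2}\bigl(|1+\alpha|+|1-\alpha|\bigr)\,r$. The elementary identity $\tfrac{1}{2}\bigl(|1+\alpha|+|1-\alpha|\bigr)=\max\{1,|\alpha|\}$ then splits the argument: when $|\alpha|\leqslant 1$ the coefficient is $1$, giving $\|x+\alpha y\|\leqslant r=\|x\pm y\|$, the first inequality of (ii); when $|\alpha|\geqslant 1$ the coefficient is $|\alpha|$, giving $\|x+\alpha y\|\leqslant|\alpha|\,\|x\pm y\|$, the first inequality of (i). The $\pm$ notation is harmless here because $\|u\|=\|v\|=r$, so both choices of sign produce the same bound.

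For the lower bounds I would apply the reverse triangle inequality $\|a+b\|\geqslant\bigl|\,\|a\|-\|b\|\,\bigr|$ to the same decomposition, with $a=\tfrac{1+\alpha}{2}u$ and $b=\tfrac{1-\alpha}{2}v$, so that $\|a\|=\tfrac{|1+\alpha|}{2}r$ and $\|b\|=\tfrac{|1-\alpha|}{2}r$. This yields $\|x+\alpha y\|\geqslant \tfrac{1}{2}\bigl|\,|1+\alpha|-|1-\alpha|\,\bigr|\,r$, and the companion identity $\tfrac{1}{2}\bigl|\,|1+\alpha|-|1-\alpha|\,\bigr|=\min\{1,|\alpha|\}$ again separates the cases: for $|\alpha|\leqslant 1$ the factor is $|\alpha|$, giving $|\alpha|\,\|x\pm y\|\leqslant\|x+\alpha y\|$, the second inequality of (ii); for $|\alpha|\geqslant 1$ the factor is $1$, giving $\|x\pm y\|\leqslant\|x+\alpha y\|$, the second inequality of (i).

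The computation is short, so there is no serious analytic obstacle; the one place demanding care is the sign bookkeeping in evaluating $|1+\alpha|$ and $|1-\alpha|$ across the thresholds $\alpha=\pm 1$, and in confirming that the reverse triangle inequality is applied with the correct magnitudes $\|a\|=\tfrac{|1+\alpha|}{2}r$ and $\|b\|=\tfrac{|1-\alpha|}{2}r$. I would verify the two $\max$/$\min$ identities at the boundary $|\alpha|=1$, where the decomposition collapses to $x+\alpha y=\pm u$ and all four inequalities degenerate consistently to $\|x+\alpha y\|=r$; this both confirms that the estimates are tight and recovers the defining isosceles relation, giving confidence that the single decomposition genuinely covers every case.
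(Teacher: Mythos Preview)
Your proof is correct. The decomposition $x+\alpha y=\tfrac{1+\alpha}{2}(x+y)+\tfrac{1-\alpha}{2}(x-y)$ combined with the two elementary identities $\tfrac{1}{2}\bigl(|1+\alpha|+|1-\alpha|\bigr)=\max\{1,|\alpha|\}$ and $\tfrac{1}{2}\bigl|\,|1+\alpha|-|1-\alpha|\,\bigr|=\min\{1,|\alpha|\}$ does exactly what you claim, and the triangle and reverse triangle inequalities then deliver all four statements at once.

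Note, however, that the paper does not supply its own proof of this lemma: it is quoted with a citation to James' 1945 paper and used as a black box in the proof of Theorem~\ref{Theorem5.3}. So there is no ``paper's proof'' against which to compare. Your argument is a clean, self-contained verification; the original reference proves the same facts (stated there in a slightly different form) by essentially the same convexity-of-the-norm reasoning, though James organizes it around the function $\alpha\mapsto\|x+\alpha y\|$ rather than the explicit $u,v$ decomposition you chose. Either packaging works, and yours has the advantage of making the $\max/\min$ structure of the bounds completely transparent.
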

In Theorem \ref{123}(i), this does not appear to be a necessary and sufficient condition. In contrast, we derive the following conclusion:
\begin{theorem}\label{Theorem5.3} Let $X$ be a finite dimensional Banach space. If $L_X\left(t_0\right)=2t_0^2-4t_0+2$ for some $t_0 \in[0,\frac{1}{2})$, then $X$ is not uniformly non-square.
\end{theorem}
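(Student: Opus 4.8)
The plan is to convert the hypothesis into an extremal statement about $\gamma_X$ and then use finite dimensionality to realise that extremum with genuine unit vectors. By Theorem~\ref{1123} we have $L_X(t_0)=\tfrac12\gamma_X(1-2t_0)$, so writing $s:=1-2t_0\in(0,1]$ the assumption $L_X(t_0)=2t_0^2-4t_0+2$ is equivalent to $\gamma_X(s)=(1+s)^2$; that is, $\gamma_X$ attains its universal upper bound $(1+s)^2$ at $s$. I would then invoke $\dim X<\infty$: since $S_X\times S_X$ is compact, the continuous map $(u,v)\mapsto\tfrac12\bigl(\|u+sv\|^2+\|u-sv\|^2\bigr)$ attains its supremum, producing $u,v\in S_X$ with $\|u+sv\|^2+\|u-sv\|^2=2(1+s)^2$.

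Next I would read off the equality case. Because $\|u\pm sv\|\le\|u\|+s\|v\|=1+s$, the identity $\|u+sv\|^2+\|u-sv\|^2=2(1+s)^2$ forces $\|u+sv\|=\|u-sv\|=1+s$, i.e. equality in the triangle inequality for the pairs $(u,sv)$ and $(u,-sv)$. The engine is then the elementary fact that $\|p+q\|=\|p\|+\|q\|$ with $p,q\neq 0$ implies $\bigl\|\tfrac{p}{\|p\|}+\tfrac{q}{\|q\|}\bigr\|=2$: assuming $\|q\|\le\|p\|$ and writing $p+q=\|q\|\bigl(\tfrac{p}{\|p\|}+\tfrac{q}{\|q\|}\bigr)+(\|p\|-\|q\|)\tfrac{p}{\|p\|}$ gives $\|p\|+\|q\|\le\|q\|\,\bigl\|\tfrac{p}{\|p\|}+\tfrac{q}{\|q\|}\bigr\|+(\|p\|-\|q\|)$, whence $\bigl\|\tfrac{p}{\|p\|}+\tfrac{q}{\|q\|}\bigr\|\ge2$, and the reverse inequality is automatic. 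Applying this to $(p,q)=(u,sv)$ gives $\|u+v\|=2$, and to $(p,q)=(u,-sv)$ gives $\|u-v\|=2$; here $s>0$, guaranteed by $t_0<\tfrac12$, is exactly what makes $\tfrac{sv}{\|sv\|}=v$.

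Finally, having produced $u,v\in S_X$ with $\|u+v\|=\|u-v\|=2$, I would appeal directly to the definition: for every $\varepsilon>0$ these vectors satisfy $\|u+v\|>2-\varepsilon$ and $\|u-v\|>2-\varepsilon$, so $X$ is not uniformly non-square. I expect the delicate point to be the attainment step rather than the final arithmetic: in infinite dimensions the supremum defining $\gamma_X(s)$ need not be reached, so one would only obtain near-extremal $u,v$ with norms close to $1+s$ and no exact triangle equality to exploit; this is precisely where $\dim X<\infty$ (compactness of $S_X$) is indispensable. I note that James' Lemma~\ref{Lemma5.3} can be used to compare $\|u+v\|$ with $\|u\pm sv\|$, but through the scaling $v=\tfrac1s(sv)$ it pins down the sharp value $2$ cleanly only at the boundary $s=1$ (i.e. $t_0=0$); for all $t_0\in[0,\tfrac12)$ the normalized triangle-equality lemma above is the uniform and cleaner tool.
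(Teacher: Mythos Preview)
Your argument is correct, and it takes a different route from the paper. The paper stays on the $L_X$ side: it picks approximately extremal pairs $x_n\in S_X$, $y_n\in B_X$ with $x_n\perp_I y_n$, passes to a limit $x_0,y_0$ by finite dimensionality, and then uses James' isosceles-orthogonality inequalities (Lemma~\ref{Lemma5.3}) to force the equalities $\|t_0x_0+(1-t_0)y_0\|=\|(1-t_0)x_0+t_0y_0\|=(1-t_0)\|x_0+y_0\|$, from which it deduces $\|x_0+y_0\|\le\min\{\|x_0\|,\|y_0\|\}$ and concludes. You instead transfer the hypothesis through Theorem~\ref{1123} to $\gamma_X(s)=(1+s)^2$, attain the supremum on $S_X\times S_X$ by compactness, and then exploit the equality case of the triangle inequality via the normalized-sum lemma to obtain unit vectors $u,v$ with $\|u\pm v\|=2$. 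Both proofs hinge on the same finite-dimensional attainment step, but the tools differ: James' Lemma~\ref{Lemma5.3} versus your triangle-equality normalization. Your route has the advantage that it produces, directly and for every $t_0\in[0,\tfrac12)$, a pair $u,v\in S_X$ with $\|u+v\|=\|u-v\|=2$, which matches the definition of ``not uniformly non-square'' verbatim; the paper's endgame is comparatively oblique, working with $x_0\in S_X$ and $y_0\in B_X$ and concluding through $\|x_0\pm y_0\|\le 1$. Conversely, the paper's approach stays closer to the original orthogonal definition of $L_X$ and avoids the detour through $\gamma_X$.
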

\begin{proof}[Proof of Theorem~{\upshape\ref{Theorem5.3}}] Since $L_X\left(t_0\right)=2t_0^2-4t_0+2$, then there exist $x_n \in S_X, y_n \in B_X$ such that $x_n \perp_I y_n$ and
	$$
	\lim _{n \rightarrow \infty} \frac{\left\|t_0x_n+(1-t_0) y_n\right\|^2+\left\|(1-t_0) x_n+t_0y_n\right\|^2}{\left\|x_n+y_n\right\|^2}=2t_0^2-4t_0+2 .
	$$
	Since $X$ is finite dimensional, then there exist $x_0, y_0 \in B_X$ such that $x_0 \perp_I y_0$ and
	$$
	\lim _{i \rightarrow \infty}\left\|x_{n_i}\right\|=\left\|x_0\right\|, \lim _{i \rightarrow \infty}\left\|y_{n_i}\right\|=\left\|y_0\right\|.
	$$
	Combining Lemma \ref{Lemma5.3},  we obtain $$\left\|t_0x_n+(1-t_0) y_n\right\| \leqslant(1-t_0)\left\|x_n+y_n\right\|\text{ and } \left\|(1-t_0) x_n+t_0y_n\right\| \leqslant(1-t_0)\left\|x_n+y_n\right\|,$$ thus,
	$$
	\frac{(1-t_0)^2\left\|x_n+y_n\right\|^2+(1-t_0)^2\left\|x_n+y_n\right\|^2}{\left\|x_n+y_n\right\|^2} \leqslant 2t_0^2-4t_0+2 ,
	$$
	then $\left\|t_0x_0+(1-t_0)y_0\right\|=(1-t_0)\left\|x_0+y_0\right\|$ and $\left\|(1-t_0) x_0+t_0y_0\right\|=(1-t_0)\left\|x_0+y_0\right\|$.\\
	On the other hand, $\left\|t_0x_0+(1-t_0)y_0\right\| \leqslant\left(1-2t_0\right)\left\|y_0\right\|+t_0\left\|x_0+y_0\right\|$, then $\left\|x_0+y_0\right\| \leqslant\left\|y_0\right\|$. Similarly,  
	$\left\|x_0+y_0\right\| \leqslant\left\|x_0\right\|$ also holds. Therefore,
	$$
	\max \left\{\left\|x_0+y_0\right\|,\left\|x_0-y_0\right\|\right\}=\left\|x_0+y_0\right\| \leqslant \min \left\{\left\|x_0\right\|,\left\|y_0\right\|\right\} \leqslant 1<1+\delta
	$$
	for any $\delta \in(0,1)$, which shows that $X$ is not uniformly non-square.
\end{proof}
Additionally, we apply the inequation between $L_X(t)$ and the modulus of convexity as described in Theorem \ref{11}, along with the properties of the modulus of convexity, to elucidate the relationship between $L_X(t)$ and both uniformly convex spaces and strictly convex spaces.
\begin{proposition}
	Let $X$ be Banach space and $L_X(0) > 1$, then $X$ is not uniformly
	convex. Particularly, if $L_X(t) >2t^2-2t+1$ for any $t\in [0, \frac{1}{2})$, then $X$ is not strictly convex.
\end{proposition}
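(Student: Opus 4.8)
The plan is to prove both assertions by contradiction, and in each case the engine is the upper estimate of Theorem~\ref{11} specialized to the extreme value $\varepsilon = 2$, combined with the characterization of strict convexity through the modulus of convexity (property (iii)), namely $\delta_X(2) = 1$. The decisive observation is purely algebraic: when $\varepsilon = 2$ and $\delta_X(2) = 1$, the factors $(1-\delta_X(2))^2$ and $(1-\delta_X(2))$ annihilate the first two summands of the bound in Theorem~\ref{11}, and only the last term survives, rescaled to
$$L_X(t) \leqslant \frac{2t^2 - 2t + 1}{4}\cdot 4 = 2t^2 - 2t + 1.$$
Since this meets the universal lower bound $2t^2 - 2t + 1 \leqslant L_X(t)$ from the opening proposition, strict convexity in fact forces equality $L_X(t) = 2t^2 - 2t + 1$ on all of $[0,\tfrac12)$.

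For the first assertion I would assume, towards a contradiction, that $L_X(0) > 1$ while $X$ is uniformly convex. Because every uniformly convex space is strictly convex, property (iii) yields $\delta_X(2) = 1$. Specializing the displayed inequality to $t = 0$ then gives $L_X(0) \leqslant 1$, contradicting the hypothesis; hence $X$ cannot be uniformly convex. Note that the same computation actually shows $X$ is not even strictly convex, so the stated conclusion is the weaker of the two available to us.

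For the ``particularly'' clause the argument is run uniformly in $t$. Suppose $L_X(t) > 2t^2 - 2t + 1$ for every $t \in [0,\tfrac12)$, and assume for contradiction that $X$ is strictly convex. Then $\delta_X(2) = 1$ by property (iii), and the collapse of Theorem~\ref{11} at $\varepsilon = 2$ delivers $L_X(t) \leqslant 2t^2 - 2t + 1$ for all such $t$, directly contradicting the hypothesis. Therefore $X$ is not strictly convex.

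The only genuinely delicate point is the bookkeeping in the first paragraph: one must verify carefully that substituting $\varepsilon = 2$ together with $\delta_X(2) = 1$ really kills the first two summands and rescales the third exactly to $2t^2 - 2t + 1$, since the whole argument rests on this collapse being exact rather than merely an inequality. Everything else is a routine invocation of the implication chain ``uniformly convex $\Rightarrow$ strictly convex $\Rightarrow \delta_X(2) = 1$'' and of the already-proved estimate in Theorem~\ref{11}; no fresh inequality needs to be established.
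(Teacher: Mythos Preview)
Your proof is correct. The second assertion is handled exactly as in the paper: assume strict convexity, invoke $\delta_X(2)=1$, and let the upper bound of Theorem~\ref{11} collapse at $\varepsilon=2$ to $L_X(t)\leqslant 2t^2-2t+1$, contradicting the hypothesis.

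For the first assertion your route differs from the paper's. The paper does not pass through strict convexity; instead it chooses $\varepsilon_0\in(0,2]$ with $L_X(0)\geqslant 1+\varepsilon_0^2/4$ (possible since $1<L_X(0)\leqslant 2$), applies Theorem~\ref{11} at $t=0$ and $\varepsilon=\varepsilon_0$ to get $(1-\delta_X(\varepsilon_0))^2\geqslant 1$, hence $\delta_X(\varepsilon_0)=0$, and concludes via property~(iv) that $X$ is not uniformly convex. Your argument instead uses the implication ``uniformly convex $\Rightarrow$ strictly convex $\Rightarrow \delta_X(2)=1$'' and then the same $\varepsilon=2$ collapse as in the second part. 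This is shorter, unifies both clauses under a single computation, and---as you correctly observe---actually yields the stronger conclusion that $L_X(0)>1$ already precludes strict convexity. The paper's approach, on the other hand, exhibits an explicit $\varepsilon_0$ at which the modulus of convexity vanishes, which is a marginally more quantitative piece of information but not needed for the stated proposition.
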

\begin{proof}
	If $L_X(0) > 1$, then there exists $\varepsilon_0 \in (0, 2]$ such that $L_X(0) \geqslant 1 + \frac{\varepsilon_0^2}{4}$.\\
	Since $L_X(t)\leqslant\left(2t^2-2t+1\right)\left(1-\delta_X(\varepsilon)\right)^2+2t(1-t) \varepsilon\left(1-\delta_X(\varepsilon)\right)+\frac{2t^2-2t+1}{4} \varepsilon^2,$ then
	$$1 + \frac{\varepsilon_0^2}{4}\leqslant\left(1-\delta_X(\varepsilon_0)\right)^2+\frac{\varepsilon_0^2}{4},$$
	we can obtain $\delta_X(\varepsilon_0)= 0$. Therefore, $\sup\left\{\varepsilon\in[0, 2] : \delta_X(\varepsilon) = 0\right\} \geqslant \varepsilon_0 > 0$, $X$ is not
	uniformly convex.
	
	Particularly, if $L_X(t) >2t^2-2t+1$ for any $t\in [0, \frac{1}{2})$, we suppose that $X$ is strictly convex, then $\delta_X(2)= 1$. We have
	$$2t^2-2t+1<\left(2t^2-2t+1\right)\left(1-1\right)^2+4t(1-t) \left(1-1\right)+\frac{2t^2-2t+1}{4}\cdot2^2=2t^2-2t+1,$$
	this is a contradiction,   $X$ is not strictly convex.		
\end{proof}
Now, we discuss the relationship between $L_X(t)$ and uniformly smooth spaces.
\begin{theorem}\label{thm12} Let $X$ be Banach space, then $X$ is uniformly smooth if 
	$$
	\lim _{t \rightarrow \frac{1}{2}^{-}} \frac{2L_X(t)-1}{1-2t}=0 .
	$$
\end{theorem}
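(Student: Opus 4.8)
The plan is to reduce the statement to property (iv) of $\gamma_X$ listed in the preliminaries, namely that $\lim_{t\to 0^+}\frac{\gamma_X(t)-1}{t}=0$ implies $X$ is uniformly smooth. The bridge is the identity from Theorem~\ref{1123}, which gives $2L_X(t)=\gamma_X(1-2t)$. So the entire argument is a change of variables followed by invoking the cited result; there is no genuinely hard analytic step, only a bookkeeping substitution.

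First I would rewrite the hypothesis. Setting $s=1-2t$, the map $t\mapsto s$ sends $[0,\tfrac12)$ onto $(0,1]$, and $t\to\tfrac12^-$ corresponds exactly to $s\to 0^+$. Since $1-2t=s$ and $2L_X(t)=\gamma_X(1-2t)=\gamma_X(s)$, I would observe the clean identity
$$
\frac{2L_X(t)-1}{1-2t}=\frac{\gamma_X(s)-1}{s}.
$$
Hence the assumed limit $\lim_{t\to\frac12^-}\frac{2L_X(t)-1}{1-2t}=0$ is literally the same statement as $\lim_{s\to 0^+}\frac{\gamma_X(s)-1}{s}=0$.

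Having matched the hypotheses, I would then simply apply property (iv) of the function $\gamma_X(t)$ recorded in Section~\ref{sec2}: the condition $\lim_{s\to 0^+}\frac{\gamma_X(s)-1}{s}=0$ forces $X$ to be uniformly smooth. This completes the proof. The only point requiring a word of care is confirming that the substitution $s=1-2t$ respects the one-sided limit (both $t\to\tfrac12^-$ and $s\to 0^+$ approach through admissible values), and that $\gamma_X$ is evaluated on its declared domain $[0,1]$; both are immediate here.

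If I wanted to make the write-up self-contained rather than leaning on the stated property (iv), the main obstacle would be reproving that the vanishing right derivative of $\gamma_X$ at $0$ controls the modulus of smoothness $\rho(\tau)$, via the inequality $\sqrt{\gamma_X(\tau)}\ge 1+\rho(\tau)$ or an analogous estimate relating $\rho(\tau)/\tau$ to $(\gamma_X(\tau)-1)/\tau$. Since property (iv) is already granted in the excerpt, I would not redo this and would instead cite it directly, keeping the proof to the substitution above.
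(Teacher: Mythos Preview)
Your proposal is correct and matches the paper's proof essentially line for line: the paper also performs the substitution $\alpha=1-2t$, rewrites $\frac{2L_X(t)-1}{1-2t}=\frac{\gamma_X(\alpha)-1}{\alpha}$ via Theorem~\ref{1123}, and then invokes property~(iv) of $\gamma_X$ to conclude uniform smoothness.
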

\begin{proof}[Proof of Theorem~{\upshape\ref{thm12}}] Let $\alpha=1-2t$, then $t=\frac{1-\alpha}{2}$ and $t \rightarrow \frac{1}{2}^{-} \Longleftrightarrow \alpha\rightarrow 0^{+}$. We have
	$$
	\begin{aligned}
		\frac{2L_X(t)-1}{1-2t}& =\frac{ \gamma_X\left(1-2t\right)-1}{1-2t} \\
		& =\frac{ \gamma_X(\alpha)-1}{\alpha}.
	\end{aligned}
	$$
	Therefore, $\lim _{\alpha \rightarrow 0^{+}} \frac{ \gamma_X(\alpha)-1}{\alpha}=0$, then $X$ is uniformly smooth.
\end{proof}
Finally, we  compute a lower bound for the geometric constant $L_X(t)$ in  $l_p-l_q (1 \leqslant q \leqslant p<\infty)$ spaces and compute its exact values in special $l_2-l_1$ and $l_{\infty}-l_1$ spaces. We also note that $l_2-l_1$ and $l_{\infty}-l_1$  are uniformly non-square.
\begin{example} Let $l_p-l_q(1 \leqslant q \leqslant p<\infty)$ be $\mathbb{R}^{2}$ with the norm defined as:
	$$
	\left\|\left(x_1, x_2\right)\right\|=\left\{\begin{array}{l}
		\left\|\left(x_1, x_2\right)\right\|_p, \quad x_1 x_2 \geqslant 0, \\
		\left\|\left(x_1, x_2\right)\right\|_q, \quad x_1 x_2<0,
	\end{array} \right.
	$$
	then  $$L_{l_p-l_q}(t)\geqslant 2^{-1-\frac{2}{p}}\left[\left(1+2^{\frac{1}{p}-\frac{1}{q}}-2^{\frac{1}{p}-\frac{1}{q}+1}\cdot t \right)^p+\left(1-2^{\frac{1}{p}-\frac{1}{q}}+2^{\frac{1}{p}-\frac{1}{q}+1}\cdot t\right)^p\right]^{\frac{2}{p}}.$$
	Particularly, $
	L_{l_2-l_1}(t)=2t^2-3t+\frac{3}{2}$ and $ L_{l_{\infty}-l_1}(t)=\frac{4t^2-8t+5}{4}.$
\end{example}
\begin{proof}
	Let $x=\left(\frac{1}{2^{\frac{1}{p}}}, \frac{1}{2^{\frac{1}{p}}}\right), y=\left(\frac{1}{2^{\frac{1}{q}}},-\frac{1}{2^{\frac{1}{q}}}\right)$, then $x\perp_{I}y$ and 
	$$
	\gamma_{l_p-l_q}(t) \geqslant \frac{\left\|x+t y\right\|^2+\left\|x-t y\right\|^2}{2}=2^{-\frac{2}{p}}\left[\left(1+t \cdot 2^{\frac{1}{p}-\frac{1}{q}}\right)^p+\left(1-t \cdot 2^{\frac{1}{p}-\frac{1}{q}}\right)^p\right]^{\frac{2}{p}},
	$$
	hence
	$$
	\begin{aligned}
		L_{l_p-l_q}(t)&=\frac{1}{2} \gamma_{l_p-l_q}\left(1-2t\right) \\
		& \geqslant 2^{-1-\frac{2}{p}}\left[\left(1+2^{\frac{1}{p}-\frac{1}{q}}-2^{\frac{1}{p}-\frac{1}{q}+1}\cdot t \right)^p+\left(1-2^{\frac{1}{p}-\frac{1}{q}}+2^{\frac{1}{p}-\frac{1}{q}+1}\cdot t\right)^p\right]^{\frac{2}{p}}.
	\end{aligned}
	$$
	
	Particularly, since $\gamma_{l_2-l_1}(t)=1+t+t^2$ and $\gamma_{l_{\infty}-l_1}(t)=\frac{1}{2}\left(1+(1+t)^2\right)$ \cite{04}, then
	$$
	L_{l_2-l_1}(t)=\frac{1}{2} \gamma_{l_2-l_1}\left(1-2t\right)=2t^2-3t+\frac{3}{2}
	$$
	and
	$$
	L_{l_{\infty}-l_1}(t)=\frac{1}{2} \gamma_{l_{\infty}-l_1}\left(1-2t\right)=\frac{4t^2-8t+5}{4}.
	$$
\end{proof}
By simple calculation, we  obtain $L_{l_2-l_1}(t), L_{l_{\infty}-l_1}(t)<2t^2-4t+2$ for any $t \in[0,\frac{1}{2})$, combined with Theorem \ref{123}  (i), this shows  that $l_2-l_1$, $l_{\infty}-l_1$ both are uniformly non-square.
\begin{remark}\label{remark}
	Using the identity of \rm{(\ref{eq01})}, we  calculate that 
	$$C_{\mathrm{NJ}}(l_2-l_1)=\frac{3}{2},\quad C_{\mathrm{NJ}}(l_{\infty}-l_1)=\frac{3+\sqrt{5}}{4}.$$
\end{remark}
In Remark \ref{remark}, we calculate the value of $C_{\mathrm{NJ}}(X)$ in these two specific spaces, which is consistent with the known results. This further demonstrates that the definition of the geometric constant $L_X(t)$ is meaningful and the identity is valid.

%\section{Results}\label{sec3.11}

%Sample body text. Sample body text. Sample body text. Sample body text. Sample body text. Sample body text. Sample body text. Sample body text.

%\section{This is an example for first level head---section head}\label{sec3}

%\subsection{This is an example for second level head---subsection head}\label{subsec2}

%\subsubsection{This is an example for third level head---subsubsection head}\label{subsubsec2}

%Sample body text. Sample body text. Sample body text. Sample body text. Sample body text. Sample body text. Sample body text. Sample body text. 

%\backmatter

%\bmhead{Acknowledgements}

%Acknowledgements are not compulsory. Where included they should be brief. Grant or contribution numbers may be acknowledged.

%Please refer to Journal-level guidance for any specific requirements.

\section*{Declarations}

%Some journals require declarations to be submitted in a standardised format. Please check the Instructions for Authors of the journal to which you are submitting to see if you need to complete this section. If yes, your manuscript must contain the following sections under the heading `Declarations':

\begin{itemize}
\item Funding This research work was funded by Anhui Province Higher Education Science Research Project (Natural Science), 2023AH050487.
\item Conflict of interest The authors declare that there is no conflict of interest in publishing the article.
%\item Ethics approval and consent to participate
%\item Consent for publication
%\item Data availability 
%\item Materials availability
%\item Code availability 
%\item Author contribution
\end{itemize}

\end{document}